\newcommand{\ve}{\varepsilon}
\newcommand{\ee}{_\varepsilon}
\newcommand{\setN}{\mathbb{N}}
\newcommand{\setR}{\mathbb{R}}
\newcommand{\calE}{{\mathcal E}}
\newcommand{\calG}{{\mathcal G}}
\newcommand{\calN}{{\mathcal N}}
\newcommand{\calO}{{\mathcal O}}
\newcommand{\calR}{{\mathcal R}}
\newcommand{\Diff}{\mathrm{D}}	
\DeclareMathOperator{\id}{id}
\newcommand{\Con}{{\mathcal C}}
\DeclareMathOperator{\dist}{dist}
\newcommand{\flip}{^\mathrm{fl}}
\newcommand{\toe}[1]{\ti{#1}_{0 \ve}}
\newcommand{\tee}[1]{\ti{#1}_{1 \ve}}
\newcommand{\comp}{\subset\subset} 
\newcommand{\cptin}{\subset\subset}
\newcommand{\bs}{\backslash}
\newcommand{\epsint}{{(0,1]}}
\newcommand{\eps}{\varepsilon}
\newcommand{\cc}{\mathcal{C}}
\newcommand{\R}{\mathbb R}
\newcommand{\cg}{\mathcal{G}}
\newcommand{\cn}{\mathcal{N}}
\newcommand{\ti}{\widetilde}
\newcommand{\Cinf}{{\cc^\infty}}
\newcommand{\al}{\alpha}
\newcommand{\N}{\mathbb N}
\newcommand{\pa}{\partial}
\newcommand{\bet}{\beta}
\newcommand{\ga}{\gamma}
\newcommand{\de}{\delta}
\newcommand{\diff}{\mathrm{d}}
\newcommand{\Leb}{\mathrm{L}}
\newcommand{\Jd}{J^{\dagger}}
\newtheoremstyle{thm}{\topsep}{\topsep}%
     {\slshape}
     {}
     {\bfseries}
     {.}
     { }
     {\thmnumber{#2.\,\,}\thmname{#1}\thmnote{\;\,(#3)}}
\newtheoremstyle{exi}{\topsep}{\topsep}%
     {}
     {}
     {\bfseries}
     {.}
     { }
     {\thmnumber{#2.\,\,}\thmname{#1}\thmnote{\;\,(#3)}}
\theoremstyle{thm}
\newtheorem{theorem}{Theorem}[section]
\newtheorem{lemma}[theorem]{Lemma}
\newtheorem{proposition}[theorem]{Proposition}
\newtheorem{definition}[theorem]{Definition}
\theoremstyle{exi}
\newtheorem{example}[theorem]{Example}
\newtheorem{remark}[theorem]{Remark}
\begin{document}

\title{Ordinary differential equations in algebras of generalized functions}

\author{Evelina Erlacher\thanks{
Institute for Statistics and Mathematics, 
Vienna University of Economics and Business, 
Augasse 2-6, 1090 Vienna, 
Austria, 
\hbox{e-mail:} evelina.erlacher@wu.ac.at
}\;,\ \ 
Michael Grosser\thanks{
Faculty of Mathematics,
University of Vienna,
Oskar-Morgenstern-Platz 1, 1090 Vienna,
Austria,
\hbox{e-mail:} michael.grosser@univie.ac.at}
}


\maketitle

\begin{abstract}
A local existence and uniqueness theorem for ODEs in the special 
algebra of generalized functions is established, as well as versions including
parameters and dependence on initial values in the generalized sense. Finally, a
Frobenius theorem is proved. In all these results, composition of generalized
functions is based on the notion of c-boundedness.
\\[-0\baselineskip]

\setlength{\parindent}{0pt}
\textbf{Mathematics Subject Classification (2010):}
34A12, 46F30 \\[.15\baselineskip]
\textbf{Keywords:} 
ODE, existence and uniqueness, Frobenius theorem, Co\-lom\-beau generalized 
functions, local solution, c-bounded
\end{abstract}



\vspace*{.3em}\section{Introduction}

At the time of their introduction in the 1980s (\cite{c1}, \cite{c2}), algebras 
of generalized functions in the Colombeau setting were primarily intended as a 
tool for treating nonlinear (partial) differential equations in the presence of 
singularities. Since then, many types of differential equations have been 
studied in the Colombeau setting (see \cite{MObook}, together with the 
references given therein, and the first part of \cite{procESI} for a variety of 
examples). Nevertheless, the authors of \cite{GKOS01} feel compelled to declare 
some 15 years later that ``a refined theory of local solutions of ODEs is not 
yet fully developed'' (p.\ 80). In fact, this state of affairs has not changed 
much since then. It is the purpose of this article to lay the foundations for 
such a theory, with composition of generalized functions based on the concept of 
c-boundedness.

As the basic object of study one may view the differential equation $\dot u(t) 
= F(t,u(t))$ with initial condition $u(\ti t_0)= \ti x_0$. Since $u(t)$ gets 
plugged into the second slot of $F$ it is evident that one has to adopt a 
suitable concept of composition of generalized functions in order to give 
meaning to the right-hand side of the ODE, keeping in mind that in general, the 
composition of generalized functions is not defined.

One way of handling the composition $u\circ v$ of generalized functions $u,v$ 
is to assume the left member $u$ to be tempered (see \cite[Subsection 
1.2.3]{GKOS01} for a definition). In this setting, a number of results on ODEs 
have been established, including a global existence and uniqueness theorem 
(\cite[Theorem 3.1]{her99}, \cite[Theorem 1.5.2]{GKOS01}). A more recent concept 
of composing generalized functions goes back to Aragona and Biagoni (cf.\ 
\cite{AB}): Here, the right member $v$ is assumed to be {\it compactly bounded} 
({\it c-bounded}\,) into the domain of $u$ (see Section \ref{prelim} for 
details); then the composition $u\circ v$ is defined as a generalized function. 
It is this latter approach we will adopt in this article. It seems to be suited 
better to local questions; moreover, the concept of c-boundedness permits an 
intrinsic generalization to smooth manifolds (\cite[Subsection 3.2.4]{GKOS01}, 
contrary to that of tempered generalized distributions.

In a number of contributions, the notion of c-boundedness has already been taken 
as the basis for the treatment of generalized ODEs. The first instance, dating 
back to  \cite{kun99b}, served as a tool for an application to a problem in 
general relativity, see \cite[Lemma 5.3.1]{GKOS01} and the improved version in 
\cite[Lemma 4.2]{EGpenrose}. Theorem 3.1 of \cite{flow}---where a theory of 
singular ordinary differential equations on differentiable manifolds is 
developed---provides a global existence and uniqueness result for autonomous 
ODEs on $\R^n$. Theorem 1.9 in \cite{hahoe08} establishes existence of a 
solution assuming an $\Leb^1$-bound (as a function of $t$, uniformly on $\R^n$ 
with respect to the second slot) on the representatives of $F$. Finally, the 
study of the Hamilton-Jacobi equation in the framework of generalized functions 
in \cite{fer11} led to some local existence and uniqueness results for ODEs, in 
a setting adapted to this particular problem. We will discuss
one of these Theorems in more detail in Section \ref{ode}.

A special feature of the existence and uniqueness results \ref{gen ode} and 
\ref{gen ode p} in Section \ref{ode} consists in their capacity to 
simultaneously allow generalized values both for $\ti t_0$ and $\ti x_0$ in the 
initial conditions, and to have, nevertheless, the domain of existence of the 
local solution equal to the one in the classical case.

The results of this article may be viewed as extending and refining the material 
of Chapter 5 of \cite{EEdiss}. Section \ref{prelim} makes available the 
necessary technical prerequisites. Local existence and uniqueness results for 
ODEs in the c-bounded setting are the focus of Section \ref{ode}: Following the 
basic theorem handling the initial value problem mentioned above, two more 
statements are established covering ODEs with parameters and $\cg$-dependence of 
the solution on initial values, respectively. Section \ref{frob}, finally, 
presents a generalized version of the theorem of Frobenius, also in the 
c-bounded setting.


\section{Notation and preliminaries}
\label{prelim}

For subsets $A,B$ of a topological space $X$, we write $A\subset\subset B$ if 
$A$ is a compact subset of the interior $B^\circ$ of $B$. By $B_r(x)$ we denote 
the open ball with centre $x$ and radius $r>0$. We will make free use of the 
exponential law and the argument swap (flip), i.e.\ for functions $f: X \times Y 
\to Z$ we will write $f(x)(y)=f(x,y)=f\flip (y,x)=f\flip (y)(x)$.

Generally, the special Colomeau algebra can be constructed with real-valued or 
with complex-valued functions. For the purposes of the present article we 
consider the real version only. Concerning fundamentals of (special) Colombeau 
algebras, we follow \cite[Subsection 1.2]{GKOS01}.

In particular, for defining the special Colombeau algebra $\calG (U)$ on a 
given (non-empty) open subset $U$ of $\setR^n$, we set $\calE (U ) 
:=\Con^\infty (U,\setR)^\epsint$ and
\begin{align*}
\calE_M (U ) & := \{ (u\ee)\ee \in \calE (U) \, | \, \forall \, K \cptin U \ 
\forall \, \alpha \in \setN_0^n \ \exists \, N \in \setN : \\
& \phantom{aaaaaaaaaaaaaaaaaaaaaaaaa} \, \sup_{x \in K} | \partial^\alpha u\ee 
(x) | = O(\ve^{-N}) \mbox{ as } \ve \to 0 \}, \\
\calN (U ) & := \{ (u\ee)\ee \in \calE (U) \, | \, \forall \, K \cptin U \ 
\forall \, \alpha \in \setN_0^n \ \forall \, m \in \setN : \\
& \phantom{aaaaaaaaaaaaaaaaaaaaaaaaa} \, \sup_{x \in K} | \partial^\alpha u\ee 
(x) | = O(\ve^m) \mbox{ as } \ve \to 0 \}.
\end{align*}
Elements of\/ $\calE_M(U)$ and $\calN(U)$ are called {\it moderate} and {\it 
negligible functions}, respectively. By \cite[Theorem 1.2.3]{GKOS01}, 
$(u\ee)\ee$ is already an element of $\calN (U)$ if the above conditions are 
satisfied for $\al=0$. $\calE_M(U)$ is a subalgebra of $\calE (U)$, $\calN
(U)$ is an ideal in $\calE_M (U)$. The {\it special Colombeau algebra} on $U$ 
is defined as
\[
\calG (U) := \calE_M (U) / \calN (U).
\]

The class of a moderate net $(u\ee)\ee$ in this quotient space will be denoted 
by $[(u\ee)\ee]$. A generalized function on some open subset $U$ of $\setR^n$ 
with values in $\setR^m$ is given as an $m$-tuple $(u_1,\cdots,u_m)\in 
\calG(U)^m$ of generalized functions $u_j\in\calG(U)$ where $j=1,\cdots,m$.

$U \to \calG (U)$ is a fine sheaf of differential algebras on $\setR^n$.

The composition $v\circ u$ of two arbitrary generalized functions is not 
defined, not even if $v$ is defined on the whole of $\setR^m$ (i.e., if 
$u\in\calG(U)^m$ and $v\in\calG(\setR^m)^l$). A convenient condition for $v\circ 
u$ to be defined is to require $u$ to be ``compactly bounded'' (c-bounded) into 
the domain of $v$. Since there is a certain inconsistency in \cite{GKOS01} 
concerning the precise description of c-boundedness (see \cite[Section 
2]{EEinvers} for details) we
include the explicit definition of this important property below. For a full 
discussion, see again \cite[Section 2]{EEinvers}.

\begin{definition}
\label{cbounded neu}
Let $U$ and $V$ be open subsets of $\setR^n$ and $\setR^m$, respectively.
\begin{enumerate}[(1)]
\item
An element $(u\ee)\ee$ of $\calE_M (U)^m$ is called {\it c-bounded from $U$ 
into $V$} if the following conditions are satisfied:
\begin{enumerate}[(i)]
\item
There exists $\ve_0 \in \epsint$, such that $u\ee(U) \subseteq V$ for all $\ve 
\le \ve_0$.
\item
For every $K \cptin U$ there exist $L \cptin V$ and $\ve_0 \in \epsint$ such 
that $u\ee (K) \subseteq L$ for all $\ve \le \ve_0$.
\end{enumerate}
The collection of c-bounded elements of $\calE_M (U)^m$ is denoted by $\calE_M 
[U,V]$.
\item
An element $u$ of $\calG (U)^m$ is called {\it c-bounded from $U$ into $V$} if 
it has a representative which is c-bounded from $U$ into $V$. The space of all 
c-bounded generalized functions from $U$ into $V$ will be denoted by $\calG 
[U,V]$.
\end{enumerate}
\end{definition}

\begin{proposition}
\label{composition}
Let $u \in \calG (U)^m$ be c-bounded into $V$ and let $v \in \calG (V)^l$, 
with representatives $(u\ee)\ee$ and $(v\ee)\ee$, respectively. Then the 
composition
\[
v \circ u := [(v\ee \circ u\ee)\ee]
\]
is a well-defined generalized function in $\calG (U)^l$.
\end{proposition}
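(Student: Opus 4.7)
The plan is to verify two separate claims: first, that the net $(v_\ve \circ u_\ve)_\ve$ belongs to $\calE_M(U)^l$, so that the composition is defined at the level of representatives; second, that the resulting class in $\calG(U)^l$ does not depend on the choice of representatives for $u$ and $v$. Throughout, I fix a compact set $K \cptin U$ and invoke c-boundedness of $(u_\ve)_\ve$ to obtain $L \cptin V$ and $\ve_0 \in \epsint$ with $u_\ve(K) \subseteq L$ for all $\ve \le \ve_0$.

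For moderateness I would apply the Faà di Bruno formula to express $\pa^\al (v_\ve \circ u_\ve)(x)$ on $K$ as a finite sum of products of derivatives $(\pa^\bet v_\ve)(u_\ve(x))$ with $|\bet| \le |\al|$ and derivatives $(\pa^\ga u_\ve)(x)$ with $|\ga| \le |\al|$. Since $u_\ve(K) \subseteq L$ for $\ve \le \ve_0$, the moderateness estimates for $v$ on $L$ bound the first factors by $O(\ve^{-N_1})$, and moderateness of $u$ on $K$ bounds the second factors by $O(\ve^{-N_2})$. Summing finitely many such products yields the required $O(\ve^{-N})$ bound, componentwise.

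For independence of representatives, let $(u'_\ve)_\ve$ be another c-bounded representative of $u$ and $(v'_\ve)_\ve$ another representative of $v$. Split
\[
v_\ve \circ u_\ve - v'_\ve \circ u'_\ve = \bigl(v_\ve \circ u_\ve - v_\ve \circ u'_\ve\bigr) + \bigl(v_\ve \circ u'_\ve - v'_\ve \circ u'_\ve\bigr).
\]
By the criterion quoted from \cite[Theorem 1.2.3]{GKOS01}, it suffices to bound each summand in the sup-norm over $K$ by $O(\ve^m)$ for every $m \in \setN$. Pick a compact $L'$ with $L \subseteq (L')^\circ$ and $L' \cptin V$; by negligibility, $\sup_{x \in K}|u_\ve(x) - u'_\ve(x)| \to 0$, so eventually $u'_\ve(K) \subseteq L'$ and every segment joining $u_\ve(x)$ to $u'_\ve(x)$ lies in $L'$. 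The mean value theorem then gives
\[
|v_\ve(u_\ve(x)) - v_\ve(u'_\ve(x))| \le \sup_{y \in L'} \|Dv_\ve(y)\| \cdot |u_\ve(x) - u'_\ve(x)|,
\]
where the first factor is $O(\ve^{-N})$ by moderateness of $v$ on $L'$ and the second is $O(\ve^m)$ for every $m$; their product is therefore $O(\ve^m)$ for every $m$. The second summand is bounded directly by $\sup_{y \in L'} |v_\ve(y) - v'_\ve(y)|$, which is $O(\ve^m)$ for every $m$ by negligibility of $v_\ve - v'_\ve$ on $L'$.

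The most delicate step is the choice of the enlarged compact $L'$ and the certification that the segments from $u_\ve(x)$ to $u'_\ve(x)$ stay inside $V$ where $v_\ve$ and its derivatives are controlled. I handle this by choosing $\delta > 0$ with $L^\delta := \{y : \dist(y,L) \le \delta\} \cptin V$ (possible since $L$ is compact in the open set $V$) and setting $L' := L^\delta$: for $\ve$ small enough that $\sup_{x\in K}|u_\ve(x) - u'_\ve(x)| \le \delta$, every such segment starts in $L$ and has length at most $\delta$, hence lies in $L^\delta$. All the estimates above then apply uniformly on $L'$.
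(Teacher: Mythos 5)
The paper states Proposition \ref{composition} without proof, treating it as a known fact (cf.\ the standard argument in \cite{GKOS01} and the discussion in \cite{EEinvers}), so there is no in-paper proof to compare against. Your argument is correct and follows the standard route: Fa\`a di Bruno combined with the compact inclusion $u_\ve(K)\subseteq L$ yields moderateness of $(v_\ve\circ u_\ve)_\ve$; for independence of representatives you split the difference into $(v_\ve\circ u_\ve - v_\ve\circ u'_\ve)$ and $(v_\ve\circ u'_\ve - v'_\ve\circ u'_\ve)$, bound the first by the mean value theorem along segments confined to a compact enlargement $L'\cptin V$, and bound the second by negligibility of $v_\ve - v'_\ve$ on $L'$. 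The point you flag as delicate --- choosing $L'=L^\delta$ and certifying that the connecting segments lie in $V$ --- is precisely the detail such proofs usually gloss over, and you handle it correctly. One small remark: invoking the zeroth-order criterion of \cite[Theorem 1.2.3]{GKOS01} presupposes that the difference net is already moderate; this follows from your first paragraph applied to both pairs of representatives, so the omission is harmless but worth stating explicitly.
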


Generalized functions can be composed with smooth classical functions provided 
they do not grow ``too fast'':
The space of {\it slowly increasing smooth functions} is given by
\begin{align*}
\calO_M (\setR^n):= \{ f \in \Con^\infty (\setR^n) \, | \, \forall \, \alpha 
\in \setN_0^n \ & \exists \, N \in \setN_0 \ \exists \, C>0: \\
& \, | \partial^\alpha f(x) | \le C (1+|x|)^N \ \forall \, x \in \setR^n \} .
\end{align*}

\begin{proposition}
\label{composition O_M}
If $u=[(u\ee)\ee] \in \calG (U)^m$ and $v \in \calO_M (\setR^m)$, then
\[
v \circ u := [(v \circ u\ee)\ee] 
\]
is a well-defined generalized function in $\calG (U)$.
\end{proposition}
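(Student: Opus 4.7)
The proof consists of two parts: first, verifying that $(v \circ u\ee)\ee$ lies in $\calE_M(U)$, and second, checking that the resulting class in $\calG(U)$ does not depend on the chosen representative $(u\ee)\ee$ of $u$.

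For the moderateness part, I would fix $K \cptin U$ and a multi-index $\alpha \in \setN_0^n$ and apply the multivariate chain rule (Fa\`a di Bruno) to express $\partial^\alpha (v \circ u\ee)(x)$ as a finite sum of terms of the form $(\partial^\beta v)(u\ee(x)) \cdot P_\beta(x)$, where $|\beta| \le |\alpha|$ and each $P_\beta(x)$ is a product of partial derivatives $\partial^\gamma u\ee(x)$ of the components of $u\ee$ with $|\gamma| \le |\alpha|$. Since $(u\ee)\ee$ is moderate, there is $N_0 \in \setN$ such that $\sup_{x \in K} |u\ee(x)| = O(\ve^{-N_0})$, and the factors $P_\beta(x)$ are individually polynomially bounded in $\ve^{-1}$ on $K$. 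The polynomial-growth assumption $|(\partial^\beta v)(y)| \le C_\beta (1+|y|)^{N_\beta}$ then yields $|(\partial^\beta v)(u\ee(x))| = O(\ve^{-N_0 N_\beta})$ uniformly on $K$. Multiplying these bounds produces the required estimate $\sup_{x \in K}|\partial^\alpha(v\circ u\ee)(x)| = O(\ve^{-N})$ for some $N$.

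For the independence of representative, let $(u'\ee)\ee$ be a second moderate representative of $u$, so that $(u\ee - u'\ee)\ee \in \calN(U)^m$; in particular $(u'\ee)\ee$ is moderate as well. By the criterion recalled in the excerpt (\cite[Theorem 1.2.3]{GKOS01}), it is enough to show negligibility of $(v\circ u\ee - v \circ u'\ee)\ee$ at $\alpha = 0$. The fundamental theorem of calculus gives
\[
v(u\ee(x)) - v(u'\ee(x)) = \int_0^1 \nabla v\bigl(u'\ee(x) + s\,(u\ee(x) - u'\ee(x))\bigr)\,ds \cdot (u\ee(x) - u'\ee(x)).
\]
Since both $u\ee$ and $u'\ee$ are moderate on $K$, the interpolated argument is bounded by $O(\ve^{-N_0})$ on $K$ uniformly in $s\in[0,1]$, so the $\calO_M$-estimate applied to the first-order derivatives of $v$ bounds the integral factor by $O(\ve^{-M})$ for some $M$. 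Multiplication by $|u\ee(x) - u'\ee(x)| = O(\ve^m)$, which holds for every $m \in \setN$, yields negligibility on $K$.

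The only technical nuisance is the Fa\`a di Bruno bookkeeping in the moderateness step; the underlying principle---polynomial growth of $v$ combined with polynomial-in-$\ve^{-1}$ moderateness of $u\ee$ and its derivatives---makes each individual estimate routine, and I do not expect any substantial analytic obstacle beyond the chain rule and the fundamental theorem of calculus.
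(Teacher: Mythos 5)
The paper states this proposition without proof, treating it as a standard fact of Colombeau theory (cf.\ \cite[Subsection 1.2]{GKOS01}). Your argument is a correct, self-contained proof: moderateness follows from the multivariate chain rule together with the polynomial-growth bounds on $\partial^\beta v$ and the moderateness estimates for $u\ee$ and its derivatives on each compact $K\cptin U$; independence of representative follows from the fundamental-theorem-of-calculus estimate for $v(u\ee)-v(u'\ee)$ combined with the zeroth-order negligibility criterion \cite[Theorem 1.2.3]{GKOS01}, which is applicable because the difference of the two (moderate) compositions is itself moderate. There are no gaps.
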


We call $\calR := \calE_M / \calN$ the ring of {\it generalized numbers}, where
\begin{align*}
\calE_M & := \{ (r\ee)\ee \in \setR^\epsint \, | \, \exists \, N \in \setN : \ 
| r\ee | = O(\ve^{-N}) \mbox{ as } \ve \to 0 \}, \\
\calN & := \{ (r\ee)\ee \in \setR^\epsint \, | \, \forall \, m \in \setN : \ | 
r\ee | = O(\ve^m) \mbox{ as } \ve \to 0 \}.
\end{align*}
For $u:=[(u\ee)\ee] \in \calG(U)$ and $x_0 \in U$, the {\it point value of $u$ 
at $x_0$} is defined as the class of $(u\ee(x_0))\ee$ in $\calR$.

On
\[
U_M := \{ (x\ee)\ee \in U^\epsint \, | \, \exists \, N \in \setN : \ | x\ee | 
= O(\ve^{-N}) \mbox{ as } \ve \to 0 \}
\]
we introduce an equivalence relation by
\[
(x\ee)\ee \sim (y\ee)\ee \; \Leftrightarrow \; \forall \, m \in \setN : \ 
|x\ee-y\ee| = O(\ve^m) \mbox{ as } \ve \to 0
\]
and denote by $\ti U:=U_M / \!\! \sim$ the set of {\it generalized points}. For 
$U=\setR$ we have $\ti \setR = \calR$. Thus, we have the canonical 
identification $\widetilde{\setR^n} = \ti \setR^n = \calR^n$.

The set of {\it compactly supported points} is
\[
\ti U_c := \{ \ti x=[(\ti x\ee)\ee] \in \ti U \, | \, \exists \, K \cptin U \ 
\exists \, \ve_0 \in \epsint \ \forall \, \ve \le \ve_0: \ x\ee \in K \}.
\]
Obviously, for $u \in \calG (U)$ and $\ti x \in \ti U_c$, $u(\ti x)$ is a 
generalized number, the {\it generalized point value of $u$ at $\ti x$}.

A point $\ti x \in \ti U_c$ is called {\it near-standard} if there exists $x 
\in U$ such that $x\ee \to x$ as $\ve \to 0$ for one (thus, for every) 
representative $(x\ee)\ee$ of $x$. In this case we write $\ti x \approx x$.

Two generalized functions are equal in the Colombeau algebra if and only if 
their generalized point values coincide at all compactly supported points 
(\cite[Theorem 1.2.46]{GKOS01}). By \cite{sanja}, it is even sufficient to check 
the values at all near-standard points. We will need a slightly refined result 
which is easy to prove using the techniques of \cite[Theorem 1.2.46]{GKOS01} and 
\cite{sanja}:

\begin{proposition}
\label{punktweise}
Let $u \in \calG (U \times V)$. Then
\[
\mbox{$u=0$ in $\calG (U \times V)$} \quad \Leftrightarrow \quad 
\parbox[t]{7cm}{$u(\, . \, , \ti y)=0$ in $\calG (U)$ for all near-standard 
points $\ti y \in \ti V_c$.}
\]
\end{proposition}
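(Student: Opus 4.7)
The $(\Rightarrow)$ direction is immediate (every partial evaluation of the zero function is zero), so the content lies in $(\Leftarrow)$. My plan is to reduce the claim to the near-standard pointwise characterization of \cite{sanja} by showing that full evaluation at a near-standard point of $U\times V$ factors through partial evaluation in the second slot.

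First I would recall that, by \cite[Theorem 1.2.46]{GKOS01} as refined in \cite{sanja}, for any $w\in\calG(W)$ on an open set $W\subseteq\setR^{n+m}$ one has $w=0$ in $\calG(W)$ iff $w(\ti z)=0$ in $\calR$ for every near-standard point $\ti z\in\ti W_c$. Applied to $W=U\times V$, this reduces the problem to showing $u(\ti z)=0$ in $\calR$ for every $\ti z\approx z=(x,y)\in U\times V$. Any such $\ti z$ has a representative of the form $(x\ee,y\ee)\ee$ with $x\ee\to x$, $y\ee\to y$, so that $\ti z=(\ti x,\ti y)$ with $\ti x\approx x\in U$ and $\ti y\approx y\in V$; in particular, $\ti x\in\ti U_c$ and $\ti y\in\ti V_c$, as a compact neighborhood of $x$ in $U$ (resp.\ of $y$ in $V$) catches the corresponding net for small $\ve$.

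Next I would observe that the partial evaluation $u(\,\cdot\,,\ti y)$ is well defined as an element of $\calG(U)$: choosing a compact neighborhood $L\cptin V$ of $y$ with $y\ee\in L$ for $\ve\le\ve_0$, the net of maps $x\mapsto(x,y\ee)$ is c-bounded from $U$ into $U\times V$, so Proposition \ref{composition} gives $u(\,\cdot\,,\ti y)=[(u\ee(\,\cdot\,,y\ee))\ee]\in\calG(U)$, independently of the chosen representatives (this is the standard moderateness/negligibility argument, using that the relation $x\ee-x'\ee\in O(\ve^m)$ propagates through the second slot of $u\ee$ on compact sets). By the same formula, the point value at $\ti x\in\ti U_c$ of this element equals $[(u\ee(x\ee,y\ee))\ee]=u(\ti x,\ti y)$ in $\calR$.

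Now the argument closes in one line: by hypothesis $u(\,\cdot\,,\ti y)=0$ in $\calG(U)$, hence its point value at the compactly supported point $\ti x$ vanishes in $\calR$, so $u(\ti z)=u(\ti x,\ti y)=0$ in $\calR$. Since this holds for every near-standard $\ti z\in\widetilde{U\times V}_c$, the cited near-standard version of \cite[Theorem 1.2.46]{GKOS01} yields $u=0$ in $\calG(U\times V)$. The main technical point is the (routine) verification that partial evaluation in the second slot at a near-standard point commutes with full point evaluation and is well defined on the quotient; both follow by the moderateness/negligibility techniques employed in \cite[Theorem 1.2.46]{GKOS01} and \cite{sanja}, which is exactly what the authors signal in the paragraph preceding the proposition.
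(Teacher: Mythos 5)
Your proof is correct and follows exactly the route the authors indicate: the paper omits the proof, saying only that the proposition ``is easy to prove using the techniques of \cite[Theorem 1.2.46]{GKOS01} and \cite{sanja}''. Your argument does precisely that --- reducing to the near-standard point-value criterion of \cite{sanja} by decomposing a near-standard $\ti z \in \widetilde{U\times V}_c$ into its components $\ti x \in \ti U_c$, $\ti y \in \ti V_c$, observing that partial evaluation $u(\,\cdot\,,\ti y)$ is well defined via Proposition \ref{composition} (the net $x\mapsto(x,y_\ve)$ being c-bounded from $U$ into $U\times V$), and noting that the point value of $u(\,\cdot\,,\ti y)$ at $\ti x$ coincides with $u(\ti z)$ on representatives. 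This is the intended proof.
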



\section{\kern-5.6pt Local \kern-1.7pt existence \kern-1.7pt and \kern-1.7pt 
uniqueness \kern-1.7pt results \kern-1.7pt for \kern-1.7pt ODEs}
\label{ode}

In the first theorem of this section we give sufficient conditions to guarantee 
a (unique) solution of the local initial value problem
\begin{equation}
\label{aufgabe}
\dot u(t)=F(t,u(t)), \quad u(\ti t_0)=\ti x_0,
\end{equation}
where $I$ is an open interval in $\setR$, $U$ an open subset of $\setR^n$, $F 
\in \calG (I \times U)^n$, $\ti t_0 \in \ti I_c$ and $\ti x_0 \in \ti U_c$.
A generalized function $u \in \calG[J,U]$ (where $J$ is some open subinterval 
of $I$) is called a (local) solution of \eqref{aufgabe} on $J$ around $\ti t_0 
\in\ti I_c$ with initial value $\ti x_0$ if the differential equation in 
\eqref{aufgabe} is satisfied in $\calG(J)^n$ and the initial condition in 
\eqref{aufgabe} is satisfied in the set $\ti U$ of generalized points.

Reflecting our decision to employ the concept of c-boundedness to ensure the 
existence of compositions, a solution on some subinterval $J$ of $I$ will be a 
c-bounded generalized function from $J$ into $U$ satisfying \eqref{aufgabe}. Due 
to the c-boundedness of $u$ the requirement for $\ti x_0$ to be compactly 
supported in fact does not constitute a restriction.

Theorem \ref{gen ode} generalizes Theorem 5.2 of \cite{EEdiss} insofar as the 
domain of existence of the local solution precisely equals the one in the 
classical case whereas the solution in \cite{EEdiss} is only defined on a 
strictly smaller interval. Moreover, the present version establishes uniqueness 
with respect to the largest sensible target space (i.e., $U$), as opposed to the 
more restricted statement in \cite{EEdiss}.

\begin{theorem}
\label{gen ode}
Let $I$ be an open subinterval of $\R$, U an open subset of $\R^n$, $\ti t_0$ a 
near-standard point in $\ti I_c$ with $\ti t_0 \approx t_0 \in I$, $\ti{x}_0 \in 
\ti{U}_c$ and $F\in \calG (I \times U)^n$.

Let $\al$ be chosen such that $[t_0-\al, t_0+\al] \cptin I$. Let $(\toe{x})\ee$ 
be a representative of $\ti{x}_0$ and $L\comp U$, $\eps_0\in(0,1]$ such that 
$\toe{x} \in L$ for all $\ve \le \ve_0$. With $\bet>0$ satisfying 
$L_\bet:=L+\overline{B_\bet(0)}\comp U$ set
\[
Q:=[t_0-\al,t_0+\al]\times L_\bet
\qquad
(\comp I \times U).
\]
Assume that $F$ has a representative $(F\ee)\ee$ satisfying
\begin{equation}
\label{condi}
\sup_{(t,x) \in Q} |F\ee(t,x)| \le a
\qquad
(\eps\leq\eps_0)
\end{equation}
for some constant $a>0$. Then the following holds:

\begin{enumerate}[(i)]
\item
\label{gen ode 1}
The initial value problem
\begin{equation}
\label{ivpg}
\dot u(t)=F(t,u(t)), \quad u(\ti t_0)=\ti{x}_0,
\end{equation}
has a solution $u \in \calG [J,W]$ where $J=(t_0-h,t_0+h)$ with 
$h=\min(\al,\frac{\bet}{a})$ and 
$W=L+B_\bet(0)$.

\item
\label{gen ode 2}
Every solution of (\ref{ivpg}) in $\calG [J,U]$ is already an element of $\calG 
[J,W]$.

\item
\label{gen ode 3}
The solution of (\ref{ivpg}) is unique in $\calG [J,U]$ if, in addition to 
\eqref{condi},
\begin{equation}
\label{condi 2}
\sup_{(t,x) \in J\times W} | \partial_2 F\ee (t,x) | = O (| \log \ve |)
\end{equation}
holds.
\end{enumerate}
\end{theorem}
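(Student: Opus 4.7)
My strategy is to solve the ODE classically at the level of representatives and then assemble the resulting net into a class in $\calG[J,W]$. Fix representatives $(F_\eps)_\eps$ satisfying \eqref{condi}, $(t_{0,\eps})_\eps$ of $\ti t_0$ with $t_{0,\eps}\to t_0$ (since $\ti t_0$ is near-standard), and $(x_{0,\eps})_\eps$ of $\ti x_0$ with $x_{0,\eps}\in L$ for $\eps\le\eps_0$. For each such $\eps$, classical Picard-Lindel\"of theory applied to the smooth function $F_\eps$ produces a unique maximal smooth solution $u_\eps$ of $\dot u=F_\eps(t,u)$, $u(t_{0,\eps})=x_{0,\eps}$. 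As long as $(t,u_\eps(t))\in Q$, the bound $|\dot u_\eps|\le a$ gives $|u_\eps(t)-x_{0,\eps}|\le a|t-t_{0,\eps}|$; together with $h\le\min(\al,\beta/a)$ and $t_{0,\eps}\to t_0$, this shows that for every $h'<h$ the solution is defined on $[t_0-h',t_0+h']$ with values in $L+\overline{B_{a(h+h')/2}(0)}\cptin W$, for all sufficiently small $\eps$. Extending $u_\eps$ arbitrarily to the rest of $J$ (and to those $\eps$ where the bound fails) yields a net which is c-bounded from $J$ into $W$.

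Moderateness of $(u_\eps)_\eps\in\calE_M(J)^n$ follows by induction on the derivative order, using $\dot u_\eps=F_\eps(\,\cdot\,,u_\eps)$, the chain rule, and moderateness of all partials of $F_\eps$ on $Q$. Since $\dot u_\eps-F_\eps(\,\cdot\,,u_\eps)$ and $u_\eps(t_{0,\eps})-x_{0,\eps}$ vanish exactly for small $\eps$, the ODE holds in $\calG(J)^n$ and the initial condition in $\ti U$, which proves (i).

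For (ii), let $v\in\calG[J,U]$ be a solution with representative $(v_\eps)_\eps$ c-bounded into $U$. Then $\dot v_\eps-F_\eps(\,\cdot\,,v_\eps)\in\calN(J)^n$ and $v_\eps(t_{0,\eps})-x_{0,\eps}$ is a negligible number. I would track the first exit times $\tau_\eps^\pm$ of $v_\eps$ from $L_\bet$: as long as $v_\eps\in L_\bet$ the bound $|\dot v_\eps|\le a+o_\eps(1)$ applies, and initially $v_\eps$ lies within distance $o_\eps(1)$ of $L$. A continuity/bootstrap argument using the strict inequality $ah'<\bet$ on each compact sub-interval $[t_0-h',t_0+h']\cptin J$ then forces $v_\eps([t_0-h',t_0+h'])\subset L+\overline{B_{ah'+o_\eps(1)}(0)}\cptin W$ for all small $\eps$, so $v\in\calG[J,W]$.

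For (iii), take two solutions $u,v\in\calG[J,U]$ and, by (ii), representatives c-bounded into $W$. Subtracting the approximate ODEs, the mean value theorem on $W$ combined with \eqref{condi 2} yields
\[
|\dot u_\eps(t)-\dot v_\eps(t)|\le C|\log\eps|\cdot|u_\eps(t)-v_\eps(t)|+|n_\eps(t)|,
\]
with $(n_\eps)_\eps\in\calN(J)^n$. Gr\"onwall then gives
\[
|u_\eps(t)-v_\eps(t)|\le\Bigl(|u_\eps(t_{0,\eps})-v_\eps(t_{0,\eps})|+\int_{t_{0,\eps}}^{t}|n_\eps(s)|\,\diff s\Bigr)\cdot e^{C|\log\eps|\cdot|t-t_{0,\eps}|}
\]
on any compact $[t_0-h',t_0+h']\cptin J$. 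The prefactor is $O(\eps^m)$ for every $m\in\N$ while the exponential grows only like $\eps^{-Ch}$, so $|u_\eps-v_\eps|=O(\eps^m)$ for every $m$. By the zeroth-order criterion \cite[Theorem 1.2.3]{GKOS01} this gives $(u_\eps-v_\eps)\in\calN(J)^n$, i.e.\ $u=v$ in $\calG(J)^n$. The step I expect to require most care is the bootstrap in (ii), where c-boundedness into $U$ must be promoted to c-boundedness into $W$ while simultaneously juggling the negligible perturbations of the ODE; (i) and the Gr\"onwall estimate in (iii) are essentially routine once the right representatives have been chosen.
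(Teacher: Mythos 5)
Your proposal follows essentially the same route as the paper: solve the classical initial value problem at each fixed $\ve$ on a compact subinterval slightly smaller than $[t_0-h,t_0+h]$, extend to all of $J$, verify moderateness inductively via the ODE, establish c-boundedness, and then use a bootstrap (for (ii)) and Gronwall with the $|\log\ve|$-bound (for (iii)). Parts (ii) and (iii) of your outline are correct and match the paper's arguments in substance.

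The one step you have genuinely skipped over is the extension in part (i). You write ``Extending $u_\ve$ arbitrarily to the rest of $J$ (and to those $\ve$ where the bound fails) yields a net which is c-bounded from $J$ into $W$.'' This does not follow from an arbitrary extension. Since $\ti t_0$ is merely near-standard, the classical solution $u_\ve$ lives on an interval $J_\ve=[t_0-h+\de_\ve,\,t_0+h-\de_\ve]$ with $\de_\ve\to 0$, which is a \emph{proper} closed subinterval of the open interval $J$. To produce a representative in $\calE_M[J,W]$ you must extend $u_\ve$ to a function that is (a) smooth across the endpoints of $J_\ve$ and (b) still takes \emph{all} its values in $W$, because Definition \ref{cbounded neu}\,(1)(i) requires a single $\ve_0$ with $u_\ve(J)\subseteq W$ for every $\ve\le\ve_0$ -- not merely control on compact subsets. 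Neither smoothness nor $W$-valuedness comes for free from an unconstrained extension. The paper handles this with a dedicated construction (Lemma \ref{ausdehn}), which tapers $u_\ve$ off near $\partial J_\ve$ using a cut-off and freezes it at the boundary values $u_\ve(t_0\pm(h-\de_\ve))$, guaranteeing both smoothness and that the image stays inside $u_\ve(J_\ve)\cup B_\de(u_\ve(a_1))\cup B_\de(u_\ve(b_1))\subseteq W$. You should either invoke such a lemma or at least specify that the extension is chosen to be constant near the boundary (smoothed by a cut-off), rather than ``arbitrary''; otherwise the claimed membership $u\in\calG[J,W]$ is not justified. Everything else in your argument is sound, and the choice of Picard--Lindel\"of versus the paper's Weissinger fixed-point theorem for the fixed-$\ve$ existence is an immaterial difference.
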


\begin{proof}
Throughout the proof, it suffices to consider only values of $\eps$ not 
exceeding $\eps_0$. Moreover, we can assume without loss of generality that
\begin{equation}
\label{epsnull}
|\toe{t}-t_0|\leq\frac{h}{4} \quad \text{ holds for all }\eps\leq\eps_0.
\end{equation}

\eqref{gen ode 1} In a first step we fix $\eps$ and solve the (classical) 
initial value problem
\begin{equation}
\label{ivpclass}
\dot{u}_\eps(t)=F_\eps(t,u\ee(t)),
\quad u\ee(\toe{t})=\toe{x},
\end{equation}
on a suitable subinterval of $[t_0-h,t_0+h]$. To this end, set
\[
\de_\eps:=\sup\{|{\ti t}_{0\eps'}-t_0|\mid 0<\eps'\leq\eps\}
\quad
\mbox{and}
\quad
J_\eps:=[t_0-h+\de_\eps, t_0+h-\de_\eps],
\]
both for $\eps\leq\eps_0$; note that $\de\ee \to 0$ as $\eps\to 0$. By this 
choice, we have
$
J_\eps\subseteq[t_0-h,t_0+h].
$
Indeed, from $t\in J_\eps$ we infer $|t-\toe{t}|\leq |t-t_0|+|t_0-\toe{t}|\leq 
h-\de_\eps+\de_\eps.$ The solution $u_\eps$ of (\ref{ivpclass}) now is obtained 
as the fixed point of the operator $T_\eps:X\ee\to X\ee$ defined by
\[
(T_\eps f)(t):=\toe{x}+\int_{\toe{t}}^t F_\eps(s,f(s)) \, \diff s
\qquad
(t\in J_\eps)
\] where $X\ee:=\{f:J_\eps\to L_\bet\mid f\text{ is continuous}\}$ becomes a 
complete metric space when being equipped with the metric $d(f,g):=\Vert 
f-g\Vert_\infty=\sup_{t\in J_\eps}|f(t)-g(t)|$. That $T_\eps$ in fact maps 
$X\ee$ into $X\ee$ is immediate from
\begin{equation}
\label{TXX}
|(T_\eps f)(t)-\toe{x}|\leq\left|\int_{\toe{t}}^t|F_\eps(s,f(s))| \, \diff s 
\right| \leq a\cdot|t-\toe{t}|
\end{equation}
by noting that $a\cdot|t-\toe{t}|\leq ah\leq\bet$ for $t\in J_\eps$.

Now the existence of a fixed point of $T_\eps$ (hence, of a solution of 
\eqref{ivpclass}) follows from Weissinger's fixed point theorem (\cite[\S 
1]{weiss}, \cite[I.1.6 (A5)]{gradug}) by the following argument: A variant of 
\cite[Lemma 3.2.47]{GKOS01} referring only to the second slot (see \cite[Remark 
3.12]{EEdiss} for an explicit version) yields a positive constant $\ga$ 
(depending on $\eps$) such that $|F_\eps(t,x)-F_\eps(t,y)|\leq\ga\cdot|x-y|$ for 
all $(t,x),(t,y)\in Q$. From this we derive, by induction, 
$|(T_\eps^kf)(t)-(T_\eps^kg)(t)|\leq\frac{\ga^k}{k!}(t-\toe{t})^k \Vert 
f-g\Vert_\infty$ for $t\in [\toe{t},t_0+h-\de_\eps]$ and $k\in \N_0$. The case 
of $t\in[t_0-h+\de_\eps,\toe{t}]$ being similar, we finally arrive at $\Vert 
T_\eps^kf- T_\eps^kg\Vert_\infty\leq\frac{(h\ga)^k}{k!} \Vert f-g\Vert_\infty$ 
which, due to $\sum_{k=0}^\infty\frac{(h\ga)^k}{k!}=e^{h\ga}<\infty$, suffices 
for an appeal to Weissinger's theorem. We obtain a solution $u_\eps$ 
of (\ref{ivpclass}) on $J_\eps$ taking values in $L_\bet$. Moreover, 
$u_\eps(t)\in W:=L+B_\bet(0)$ for $t\in J_\eps^\circ$ by (\ref{TXX}).

If $\de_\eps=0$ (i.e., if $t_0$ is standard) then $u_\eps$ is defined on 
$[t_0-h,t_0+h]$ and we set $\ti u_\eps:=u_\eps$; by (\ref{TXX}), $\ti 
u_\eps(J)\subseteq W$. In the case $\de_\eps>0$, Lemma \ref{ausdehn} provides 
$\ti u_\eps\in\Cinf([t_0-h,t_0+h],W)$ being equal to $u_\eps$ on $\ti 
J_\eps:=[t_0-h+2\de_\eps, t_0+h-2\de_\eps]$. In both cases, $\toe{t}\in\ti 
J_\eps$, $\ti u_\eps(\toe{t})=\toe{x}$ and $\dot{ \ti u}_\eps(t)=F_\eps(t,\ti 
u\ee(t))$ holds on $\ti J_\eps$.

In order to show that $(\ti u_\eps)_\eps$ is moderate on $J=(t_0-h,t_0+h)$ it 
suffices to establish the corresponding estimates on each $\ti J_{\eps*}$ (with 
$\eps^*\leq\eps_0$), allowing us to deal with $u_\eps$ rather than with $\ti 
u_\eps$ for all $\eps\leq\eps^*$. Thus, let $t\in\ti J_{\eps^*}$ and 
$\eps\leq\eps^*$. We have $u_\eps(t)\in L_\bet$ and $|\dot u_\eps(t)|\leq a$. 
Via the moderateness estimates for $\pa_i F_\eps$ ($i=1,2)$ we now obtain, by 
differentiating $\dot{u}_\eps(t)=F_\eps(t,u\ee(t))$, an estimate of the form
\begin{equation*}
|\ddot u\ee(t)|
\le |\partial_1 F\ee (t,u\ee(t))| + |\partial_2 F\ee (t,u\ee(t))| \cdot
|{\dot u\ee}(t)|
\le C \ve^{-N}
\end{equation*}
with constants $C>0$ and $N \in \N$  not depending on $\eps$. The estimates for 
the higher-order derivatives of $u\ee$ are now obtained inductively by 
differentiating the equation for $\ddot u_\eps$.

Concerning c-boundedness of $(\ti u_\eps)_\eps$ from $J$ into $W$ let 
$\Jd:=[t_0-h',t_0+h']$ with $\frac{h}{4}<h'<h$. For $\eps$ small enough as to 
satisfy $2\de_\eps\leq h-h'$, we have $\Jd\subseteq \ti J_\eps$.  (\ref{TXX}) 
now yields $\ti u_\eps(\Jd)=u_\eps(\Jd)\subseteq 
L+\overline{B_{a(h'+\de_\eps)}}\comp L+B_\bet(0)$.

Now that we have shown that the net $(\ti u\ee)\ee$ represents a member of 
$\calG[J,W]$ ($\subseteq \calG[J,U]$), it follows from the result established 
for fixed $\ve$ that the class of $(\ti u\ee)\ee$ is a solution of \eqref{ivpg} 
on $J$ in the sense specified at the beginning of this section: Due to the fact 
that equality in Colombeau spaces involves null estimates only on compact 
subsets of the domain, it indeed suffices that every $\ti u\ee$ satisfies the 
(classical) equation on $\ti J\ee$, taking into account $\de\ee \to 0$.

\eqref{gen ode 2} Assume that $v=[(v_\eps)_\eps]\in\cg[J,U]$ satisfies $\dot 
v(t)=F(t,v(t))$ and $v(\ti t_0)=\ti{x}_0$. With $\toe{t}$, $\toe{x}$ and 
$F_\eps$ as in part \eqref{gen ode 1} we have $v_\eps(\toe{t})=\toe{x}+\ti 
n_\eps$ and $\dot v_\eps(t)=F_\eps(t,v_\eps(t))+n_\eps(t)$ for some  
$(\ti{n}\ee)\ee \in \cn^n$ and $(n\ee)\ee \in \cn (J)^n$, respectively.

In order to show that $v\in\cg[J,W]$ with $W=L+B_\bet(0)$ we again choose 
$\Jd=[t_0-h',t_0+h']\comp J$ with $\frac{h}{4}<h'<h$. Setting 
$\de:=\frac{a}{2}(h-h')$, we select $\eps_1(\leq\eps_0)$ such that for all 
$\eps\leq\eps_1$, the three conditions $|\ti n_\eps|<\frac{\de}{3}$, 
$\int_{\Jd}|\ti n_\eps(s)| \, \diff s<\frac{\de}{3}$ and 
$a|\de_\eps|<\frac{\de}{3}$ are satisfied. Now for $\eps\leq\eps_1$, we claim 
that $|v_\eps(t)-\toe{x}|\leq\frac{a}{2}(h+h')$ holds for all $t\in 
\Jd_+:=[\toe{t},t_0+h']$. If $|v_\eps(t)-\toe{x}|<\frac{a}{2}(h+h')$ for all 
$t\in\Jd_+$, then we are done. Otherwise, choose $t^*$ minimal in $\Jd_+$ with 
$|v_\eps(t^*)-\toe{x}|=\frac{a}{2}(h+h')$. We demonstrate that, in fact, 
$t^*=t_0+h'$. From the estimate
\begin{align*}
\frac{a}{2}(h+h')
& =|v_\eps(t^*)-\toe{x}| \leq |\ti n_\eps|+\int_{\toe{t}}^{t^*}|\ti n_\eps(s)| 
\, \diff s + \int_{\toe{t}}^{t^*}|F_\eps(t,\underbrace{v_\eps(t)}_{\in L_\bet})| 
\, \diff s \\[-12pt]
& \leq \frac{\de}{3}+\frac{\de}{3}+a|\de_\eps|+a(t^*-t_0)\\
& \leq \frac{a}{2}(h-h')+a(t^*-t_0)
\end{align*}
it readily follows that $t^*\geq t_0+h'$, and thus  $t^*=t_0+h'$. Since, by a 
similar argument, $|v_\eps(t)-\toe{x}|\leq\frac{a}{2}(h+h')$ holds also for all 
$t\in \Jd_-=[t_0-h',\toe{t}]$ we finally arrive at
\[
v_\eps(\Jd)\subseteq L+\overline{B_{\frac{a}{2}(h+h')}(0)}\comp L+B_\bet(0)=W.
\]
This proves that $v$ is c-bounded from $J$ into $W$.

\eqref{gen ode 3} Let $v=[(v\ee)\ee] \in \cg [J,U]$ be another solution and 
$(n\ee)\ee \in \cn^n$, $(\ti{n}\ee)\ee \in \cn^n$ as above. By \eqref{gen ode 
2}, $v\in\cg[J,W]$. As before let $\Jd:=[t_0-h',t_0+h']$ (with 
$\frac{h}{4}<h'<h$) be a compact subinterval of $J$. Since both $(u\ee)\ee$ and  
$(v\ee)\ee$ are c-bounded from $J$ into $W$, there exists a compact subset $K$ 
of $W$ such that $u\ee(\Jd) \subseteq K$ and $v\ee(\Jd)\subseteq K$ for $\ve$ 
sufficiently small. Moreover, we can assume $\de_\eps<h-h'$. Applying the 
second-slot version of \cite[Lemma 3.2.47]{GKOS01} to the function $F_\eps$ and 
some (fixed) compact set $K'$ with $K\comp K'\comp W=L+B_\bet(0)$ yields a 
constant $C'$ (only depending on $K'$) such that
\begin{align*}
|F\ee (t,x) - F\ee (t,y)|
& \le C' \sup_{(s,z) \in \Jd \times K'} (|F\ee (s,z)| +  |\partial_2 F\ee 
(s,z)|) \cdot |x-y| \\
& \le C' ( a+ C_1|\log\eps|) \cdot|x-y|
\end{align*}
holds for all $t\in\Jd$ and all $x,y\in K$ (note that $\Jd \times K'\subseteq 
J\times W\subseteq Q$) where $C_1>0$ is the constant provided by (\ref{condi 
2}). Therefore, for $t\in \Jd$ it follows that
\begin{align*}
|v\ee(t)&-u\ee(t)| \le \\
\phantom{a} & \le |\toe{y} - \toe{x} | + \bigg| \int_{\toe{t}}^t ( |F\ee(s, 
v\ee(s)) - F\ee(s,u\ee(s) )|+|n\ee(s)| ) \, \diff s \, \bigg| \\
& \leq |\ti{n}\ee| + \bigg| \int_{\toe{t}}^t |n\ee(s)| \, \diff s \, \bigg| + 
C'(a+C_1 |\log \ve |) \cdot \bigg| \int_{\toe{t}}^t |v\ee(s)-u\ee(s)| \, \diff s 
 \, \bigg| \\
& \le C_2 \, \ve^m + (C_3 + C_4 |\log \ve|) \cdot \bigg| \int_{\toe{t}}^t 
|v\ee(s)-u\ee(s)| \, \diff s \, \bigg|
\end{align*}
for suitable constants $C_2,C_3,C_4 >0$ and arbitrary $m \in \N$. By Gronwall's 
Lemma, we obtain
\[
\sup_{t \in \Jd} |v\ee(t)-u\ee(t)|
\le C_2 \, \ve^m \cdot e^{(C_3 + C_4 |\log \ve|) \cdot | \int_{\toe{t}}^t 1 \, 
\diff s |}
\le C_0\, \ve^{m-h \, C_4}
\]
for some constant $C_0>0$ (note that $|\toe{t}-t_0|\leq h'+\de_\eps\leq h$). 
This concludes the proof of the theorem.
\end{proof}

\begin{remark}
\label{rem gen ode}
\begin{enumerate}[(i)]
\item
\label{rem gen ode 1}
The proof of Theorem \ref{gen ode} establishes the following statement on the 
level of representatives: For any given representatives $(\toe{t})\ee$ of 
$\ti{t}_0\in\ti I_c$ ($\toe{t}\to t_0\in I$), $(\toe{x})\ee$ of 
$\ti{x}_0\in\ti{U}_c$ and $(F\ee)\ee$ of $F\in\calG (I\times U)^n$ satisfying 
(\ref{condi}) the following holds:
If $\al$, $L$, $\eps_0$ and $\bet$ are chosen as in Theorem \ref{gen ode} 
(including condition (\ref{epsnull}) as to $\eps_0$), then $u$ has a 
representative $(\ti u_\eps)\ee$ that on every compact subinterval of $J$ 
satisfies the classical initial value problem \eqref{ivpclass} for $\eps$ 
sufficiently small.

\item
\label{rem gen ode 2}
If $\ti t_0$ is standard, i.e.\ (without loss of generality) $\toe{t}=t_0\in I$ 
for all $\eps$, then $\de_\eps=0$ and every $u_\eps$ exists (as a solution of 
(\ref{ivpclass})) even on $[t_0-h,t_0+h]$.

\item
\label{rem gen ode 3}
If $\ti x_0$ is standard, i.e.\ (without loss of generality) $\toe{x}=x_0\in U$ 
for all $\eps$, then $L:=\{x_0\}$ yields $L_\bet=\overline{B_\bet(x_0)}$ as in 
the classical case.
\end{enumerate}
\end{remark}

\begin{lemma}
\label{ausdehn}
\begin{enumerate}[(i)]

\item
\label{ausdehn 1}
Let $a<a_1<a_2<b_2<b_1<b$ and let $U$ be a (non-empty) open subset of $\R^n$. 
Then for $f\in\Cinf([a_1,b_1],U)$ being given, there exists $\ti 
f\in\Cinf([a,b],U)$ with $\ti f=f$ on some open neighbourhood of $[a_2,b_2]$.

\item
\label{ausdehn 2}
For any given positive $\de$, the function $\ti f$ can be chosen such as to 
satisfy $\ti f([a,b])\subseteq f([a_1,b_1])\cup B_\de(f(a_1))\cup 
B_\de(f(b_1))$.

\end{enumerate}
\end{lemma}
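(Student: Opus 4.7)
My plan is to avoid any direct extension of $f$ and instead construct $\ti f$ as a composition $f \circ \phi$, where $\phi : [a,b] \to [a_1, b_1]$ is a smooth reparametrisation equal to the identity on an open neighbourhood of $[a_2, b_2]$. Because $\phi$ takes values in $[a_1, b_1]$, the composition $\ti f = f \circ \phi$ automatically lands in $f([a_1, b_1]) \subseteq U$, which is a statement strictly stronger than either (i) or (ii).

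To build $\phi$, I would fix $\alpha, \beta$ with $a_1 < \alpha < a_2$ and $b_2 < \beta < b_1$ and pick smooth cutoffs $\eta_L, \eta_R \in \Cinf(\R, [0,1])$ with $\eta_L = 0$ on $(-\infty, \alpha]$ and $\eta_L = 1$ on some neighbourhood of $[a_2, \infty)$, together with the mirror-image properties for $\eta_R$ at $\beta$ and at $b_2$. Then set $\phi(t) := (1 - \eta_L(t))\,a_1 + \eta_L(t)\,t$ on $[a, a_2]$, $\phi(t) := t$ on $[a_2, b_2]$, and the mirrored formula on $[b_2, b]$. On the left piece, $\phi(t)$ is a convex combination of $a_1$ and $t$: on $[a, \alpha]$ it reduces to $a_1$, and on $[\alpha, a_2]$ we have $t \ge \alpha > a_1$, so the combination lies in $[a_1, t] \subseteq [a_1, a_2]$. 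In particular $\phi$ stays in $[a_1, b_1]$; the right piece is treated symmetrically. Near $a_2$ (respectively $b_2$) the outer cutoff has already stabilised at $1$, so the outer formula reduces to $t$ and glues smoothly with the middle piece. Hence $\phi \in \Cinf([a,b], [a_1, b_1])$ and $\phi = \id$ on an open neighbourhood of $[a_2, b_2]$.

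Setting $\ti f := f \circ \phi$ then produces a smooth $\ti f : [a, b] \to U$ with $\ti f = f$ on an open neighbourhood of $[a_2, b_2]$, which proves (i); and $\ti f([a, b]) \subseteq f([a_1, b_1])$, which trivially yields (ii) (the balls are not even needed). The only real obstacle is the construction of the smooth cutoffs $\eta_L, \eta_R$ with plateaus of the prescribed width so that the three pieces of $\phi$ glue with matching derivatives of all orders at $a_2$ and $b_2$; this is a completely standard application of the existence of $\Cinf$ bump functions on $\R$.
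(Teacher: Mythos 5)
Your proof is correct and takes a genuinely different route from the paper's. The paper constructs $\ti f$ by interpolating \emph{in the codomain}: it picks a scalar cutoff $\psi$ that transitions from $0$ to $1$ over short subintervals near $a_1$ and $b_1$, sets $\ti f(t) := \psi(t)\,f(t) + (1-\psi(t))\,f(a_1)$ near the left end (analogously near the right), and freezes $\ti f$ at the constants $f(a_1)$, $f(b_1)$ outside $[a_1,b_1]$. To keep the interpolated values inside $U$, the paper must first choose $\de>0$ with $\overline{B_\de(f(a_1))}\cup\overline{B_\de(f(b_1))}\subseteq U$ and then shrink the transition zones so that $f$ itself stays $\de$-close to $f(a_1)$ (resp.\ $f(b_1)$) on them; assertion \eqref{ausdehn 2} then falls out of that choice. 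Your construction instead reparametrizes the \emph{domain}: $\ti f := f\circ\phi$ with a smooth $\phi:[a,b]\to[a_1,b_1]$ that is the identity near $[a_2,b_2]$ and constant near the ends. This sidesteps the codomain analysis entirely --- you never need to use that $U$ is open, since $\ti f([a,b])\subseteq f([a_1,b_1])$ automatically, and you get a strictly stronger form of \eqref{ausdehn 2} (no balls needed) for free. The one point worth making explicit is why $f\circ\phi$ is $\Cinf$ when $\phi$ attains the boundary values $a_1$, $b_1$ of the closed interval on which $f$ lives: either observe that $\eta_L$, $\eta_R$ (and hence $\phi$) are flat to infinite order at the points where $\phi$ first leaves $a_1$ or $b_1$, so Fa\`a di Bruno shows all derivatives of the composition vanish there and match the adjacent constant piece; or, more simply, extend $f$ to a $\Cinf$ function on an open neighbourhood of $[a_1,b_1]$ (a standard fact, with no image control required) and compose $\phi$ with that extension. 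Either fix is one sentence; the rest of the argument is solid.
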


\begin{proof}
\eqref{ausdehn 1} Choose $\de>0$ as to satisfy 
$\overline{B_\de(f(a_1))}\cup\overline{B_\de(f(a_2))}\subseteq U$. Choose 
$\eta>0$ such that $f(t)\in B_\de(f(a_1))$ holds for $t\in [a_1,a_1+2\eta]$ and 
$f(t)\in B_\de(f(b_1))$ holds for $t\in [b_1-2\eta,b_1]$; without loss of 
generality we may assume $\eta<\frac{1}{3}\min(a_2-a_1,b_1-b_2)$.

Now let $\psi$ be a smooth function with $0\leq\psi\leq1$ such that $\psi=1$ on 
$[a_1+2\eta,b_1-2\eta]$ and $\psi=0$ outside $(a_1+\eta,b_1-\eta)$. Then $\ti f$ 
defined on $[a,b]$ by
\[
\ti f(t):=
\begin{cases}
 f(a_1) & t\in[a,a_1+\eta] \\
 f(t)\psi(t)+f(a_1)(1-\psi(t)) & t\in[a_1,a_2] \\
 f(t) & t\in[a_1+2\eta,b_1-2\eta] \\
 f(t)\psi(t)+f(b_1)(1-\psi(t)) & t\in[b_2,b_1] \\
 f(b_1) & t\in[b_1-\eta,b]
\end{cases}
\]
satisfies all requirements since each of the five defining terms is smooth and 
on overlaps the two relevant terms give rise to the same values.

\eqref{ausdehn 2} is clear from the proof of \eqref{ausdehn 1}.
\end{proof}

Theorem \ref{gen ode} is distinguished from the related result \cite[Theorem 
4.5]{fer11} by the following features: The existence statement (i) of Theorem 
\ref{gen ode} does not require logarithmic control of derivaties of $F$ which, 
by contrast, is assumed in \cite{fer11}; the domain interval of the solution in 
Theorem \ref{gen ode} equals the classical (open) one given by $(t_0-h,t_0+h)$ 
with $h=\min(\al,\frac{\bet}{a})$ while in \cite{fer11} one has to take 
$h<\min(\al,\frac{\bet}{a})$; finally, the boundedness assumption on $F$ in 
\cite{fer11} refers to the whole open domain of $F$ whereas in Theorem \ref{gen 
ode} it suffices to have boundedness of $F$ on the (compact) subset $Q$. 
Generally, all existence and uniqueness results for ODEs in \cite{fer11} are 
tailored for applications of the method of characteristics to the generalized 
Hamilton-Jacobi problem; hence the setting of \cite{fer11} always includes 
initial conditions as parameters, necessitating the logarithmic growth 
condition even for existence results (compare Theorem \ref{gen ode p} below).

The following three examples illustrate the significance of the boundedness 
assumption on $F$ by displaying increasing obstacles against obtaining a 
generalized solution from the classical ones obtained for fixed $\eps$, in the 
absence of condition \eqref{condi}.

\begin{example}
Let $F \in \calG (\setR \times \setR)$ be given by the representative $F\ee 
(t,x):= \frac 1 \ve \big( 2- \frac{1}{1+x^2} \big)$, and let $t_0=0$ and 
$x_0=0$. Then $F$ fails to satisfy condition \eqref{condi} on any neighbourhood 
of $(t_0,x_0)$. Nevertheless, there exists a unique global solution for every 
$\ve$: Integrating $\dot x(t)=F\ee(t,x)$ yields $\frac x 2 + \frac{1}{2 \sqrt 2} 
\arctan (\sqrt 2 \, x) = \frac 1 \ve t$. Setting $f(x)=\frac x 2 + \frac{1}{2 
\sqrt 2} \arctan (\sqrt 2 \, x)$, we obtain $u\ee(t):=f^{-1}(\frac 1 \ve t)$ as 
the solution of the classical initial value problem. By Proposition 
\ref{composition O_M}, $(u\ee)\ee \in \calE_M (\setR)$. However, $(u\ee)\ee$ is 
not c-bounded. Hence, $u\ee$ solves the differential equation for every $\ve$ 
but on any interval around $0$, the generalized function $[(u\ee)\ee]$ is not a 
solution of the initial value problem in the setting of the c-bounded theory of 
ODEs since the composition $F(t,u(t))$ exists only componentwise on the level 
of representatives, yet not in the sense of Proposition \ref{composition}.
\end{example}

\begin{example}
Let $F \in \calG (\setR \times \setR)$ be given by the representative $F\ee 
(t,x):= \frac x \ve$, and let $t_0=0$ and $x_0=1$. Again, $F$ does not satisfy 
condition \eqref{condi} on any neighbourhood of $(t_0,x_0)$. For each $\ve$, 
there exists a unique (even global) solution $u\ee(t)=e^{\frac t \ve}$. However, 
$(u\ee)\ee$ is not moderate on any neighbourhood of $0$.
\end{example}

\begin{example}
\label{e: schrumpf}
Let $F \in \calG \big( \setR \times (\setR \bs \{ -1 \}) \big)$ be defined by 
the representative $F\ee (t,x):= -\frac{t}{x+1} \cdot g(\ve)$ where $g: (0,1] 
\to \setR$ is a smooth map satisfying $g(\ve) \to \infty$ for $\ve \to 0$. Let 
$t_0=0$ and $x_0=0$. Then $F$ violates condition \eqref{condi} on any 
neighbourhood of $(t_0,x_0)$. For every $\ve$ we obtain (unique) solutions
$
u\ee(t)= \sqrt{1- g(\ve) \,t^2}-1
$
that are defined, at most, on the open interval
$( - 1/\sqrt{g(\ve)} , 1/\sqrt{g(\ve)} )$. Hence, there is not even a common domain.

In this example, $F$ failing to satisfy condition $\eqref{condi}$ leads to 
shrinking of the solutions' domains as $\ve \to 0$. Note that this result is not 
a consequence of the rate of growth of $|F\ee (t,x)|$ on any compact set; 
rather, it only matters that $|F\ee (t,x)|$ does increase infinitely (as $\ve 
\to 0$).
\end{example}

Theorem \ref{gen ode} can handle jumps as the following example shows.

\begin{example}
Let $I$ be an open interval in $\setR$ and $U$ an open subset of $\setR^n$. 
Consider the initial value problem
\begin{equation}
\label{ode H}
\dot u (t) = f(t,u(t)) \cdot (\iota H) (t) + g(t,u(t)), \quad u(t_0)=x_0,
\end{equation}
where $f,g \in \Con^\infty (I \times U,\setR^n)$, $t_0 \in I$, $x_0 \in U$, and 
where $\iota H$ denotes the embedding of the Heaviside function $H$ into the 
Colombeau algebra. If $\rho$ is a mollifier (i.e.\ a Schwartz function on 
$\setR$ satisfying $\int \rho(x) \, \diff x = 1$ and $\int x^\alpha \rho(x) \, 
\diff x  = 0$ for all $\alpha \ge 1$), then a representative $(H\ee)\ee$ of 
$\iota H$ is given by $H\ee (t)=\int_{-\infty}^t \frac{1}{\ve} \, \rho \left( 
\frac s \ve \right) \, \diff s$. Fix some $\alpha>0$ such that 
$[t_0-\alpha,t_0+\alpha] \subseteq I$ and choose an open subset $W$ of $U$ with 
$x_0 \in W \subseteq \overline{W} \cptin U$. A short computation shows that $| 
H\ee (t) | \le \| \rho \|_{L^1 (\setR^n)}$ for all $t$. Thus, $| f(t,x) \cdot 
H\ee (t) + g(t,x) | \le a_1 \| \rho \|_{L^1 (\setR^n)} + a_2=:a$ on 
$[t_0-\alpha,t_0+\alpha] \times \overline{W}$ for some constants $a_1,a_2>0$. 
Hence, by Theorem \ref{gen ode}, the initial value problem \eqref{ode H} 
possesses a solution $u$ in $\calG [J,W]$ where $J:= (t_0-h,t_0+h)$ and $h= \min 
\Big( \alpha, \frac{\dist (x_0,\partial W)}{a} \Big)$. Since the initial value 
problem also satisfies \eqref{condi 2}, the solution is unique in $\calG [J,U]$.
\end{example}

Next, we turn our attention to generalized ODEs including parameters. In view 
of our goal to establish a Frobenius theorem in the present setting, we want 
the solution to be $\calG$-dependent on the parameter.

\begin{theorem}
\label{gen ode p}
Let $I$ be an open subinterval of $\setR$, $U$ an open subset of $\setR^n$, $P$ 
an open subset of $\setR^l$, $\ti t_0$ a near-standard point in $\ti I_c$ with 
$\ti t_0 \approx t_0 \in I$, $\ti{x}_0 \in \ti{U}_c$ and $F \in \calG (I \times 
U \times P)^n$.

Let $\al$ be chosen such that $[t_0-\al, t_0+\al] \cptin I$. Let $(\toe{x})\ee$ 
be a representative of $\ti{x}_0$ and $L\comp U$, $\eps_0\in(0,1]$ such that 
$\toe{x} \in L$ for all $\ve \le \ve_0$. With $\bet>0$ satisfying 
$L_\bet:=L+\overline{B_\bet(0)}\comp U$ set
\[
Q:=[t_0-\al,t_0+\al]\times L_\bet
\qquad
(\comp I \times U).
\]
Assume that $F$ has a representative $(F\ee)\ee$ satisfying
\begin{equation}
\label{condi p}
\sup_{(t,x,p) \in Q \times P} |F\ee(t,x,p)| \le a
\qquad (\ve  \le \ve_0)
\end{equation}
for some constant $a>0$ and that for all compact subsets $K$ of $P$
\begin{equation}
\label{condi 2 p}
\sup_{(t,x,p) \in Q \times K} | \partial_2 F\ee (t,x,p) | = O (| \log \ve |) .
\end{equation}

Then the following holds: There exists $u \in \calG [P \times J,W]$ with 
$J:=[t_0-h,\linebreak t_0+h]$, $h = \min \big( \alpha, \frac{\beta}{a} \big)$ 
and $W=L+B_\beta(0)$ such that for all $\ti p \in \ti P_c$ the map $u(\ti p,\, . 
\,) \in \calG [J,W]$ is a solution of the initial value problem
\begin{equation*}
\dot u (t)=F(t,u(t),\ti p), \quad u(\ti t_0)=\ti{x}_0 .
\end{equation*}
The solution $u$ is unique in $\calG [P \times J,U]$.
\end{theorem}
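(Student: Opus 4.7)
The plan is to reduce to Theorem \ref{gen ode} by solving the classical ODE smoothly in the parameter $p$ on the level of representatives, and then to verify that moderateness survives in the parameter slots thanks to the logarithmic bound \eqref{condi 2 p}.

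For every $\ve\leq\ve_0$ and every $p\in P$, the uniform-in-$p$ bound \eqref{condi p} together with the fixed-point argument from the proof of Theorem \ref{gen ode}\eqref{gen ode 1} yields a classical solution $u_\ve(\cdot,p)\in\Cinf(J_\ve,L_\bet)$ of $\dot w(t)=F_\ve(t,w(t),p)$, $w(\toe{t})=\toe{x}$, on the $p$-independent interval $J_\ve$. Standard smooth dependence on parameters---the classical Picard setup applied slotwise---promotes this to $u_\ve\in\Cinf(J_\ve\times P,L_\bet)$. Lemma \ref{ausdehn}, applied in the $t$-variable with $p$-independent cutoffs $\psi$, then produces an extension $\ti u_\ve\in\Cinf(J\times P,W)$ agreeing with $u_\ve$ on $\ti J_\ve\times P$ and satisfying $\ti u_\ve(\toe{t},p)=\toe{x}$ for every $p$.

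The main obstacle is moderateness of $(\ti u_\ve)_\ve$ on compact subsets of $J\times P$. Pure $t$-derivatives are handled exactly as in Theorem \ref{gen ode}. For derivatives involving the parameter, differentiating $\pa_1 u_\ve(t,p)=F_\ve(t,u_\ve(t,p),p)$ with respect to $p_j$ gives the \emph{linear} variational equation
\[
\pa_1 w_\ve(t,p)=\pa_2 F_\ve(t,u_\ve(t,p),p)\cdot w_\ve(t,p)+\pa_3 F_\ve(t,u_\ve(t,p),p),
\quad
w_\ve(\toe{t},p)=0,
\]
where $w_\ve:=\pa_{p_j}u_\ve$. By \eqref{condi 2 p} the coefficient $\pa_2 F_\ve$ is $O(|\log\ve|)$ on compact $p$-sets while the source $\pa_3 F_\ve$ is moderate; Gronwall's lemma, combined with $\exp(hC_1|\log\ve|)=\ve^{-hC_1}$, yields a moderate bound for $w_\ve$. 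Higher-order mixed derivatives follow by induction, each step giving one further linear equation whose coefficient is again $O(|\log\ve|)$ and whose source is moderate by the induction hypothesis. Without \eqref{condi 2 p} these Gronwall bounds would fail to be polynomial in $\ve^{-1}$, so this is precisely where the logarithmic condition is indispensable.

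Once moderateness and c-boundedness of $(\ti u\ee)\ee$ into $W$ are in hand, $u:=[(\ti u\ee)\ee]$ is a well-defined element of $\calG[P\times J,W]$. For near-standard $\ti p\approx p\in P$ the class $u(\ti p,\cdot)\in\calG[J,W]$ solves the parameter-$\ti p$ initial value problem by the representative-level argument of Remark \ref{rem gen ode}\eqref{rem gen ode 1}. For uniqueness, let $v\in\calG[P\times J,U]$ be another solution. For each near-standard $\ti p\approx p\in P$, part \eqref{gen ode 2} of Theorem \ref{gen ode} places $v(\ti p,\cdot)$ in $\calG[J,W]$, and \eqref{condi 2 p} restricted to a compact neighbourhood of $p$ supplies \eqref{condi 2}; part \eqref{gen ode 3} then yields $u(\ti p,\cdot)=v(\ti p,\cdot)$ in $\calG(J)^n$. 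Proposition \ref{punktweise} finally gives $u=v$ in $\calG(P\times J)^n$.
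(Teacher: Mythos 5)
Your proposal is correct and follows essentially the same route as the paper: solve the classical parameter-dependent ODE for each fixed $\ve$, extend in $t$ via Lemma \ref{ausdehn}, obtain moderateness through Gronwall estimates powered by the logarithmic bound \eqref{condi 2 p}, and derive uniqueness from Theorem \ref{gen ode} combined with Proposition \ref{punktweise}. One small organizational refinement in the paper's version is worth noting: mixed derivatives involving at least one $t$ are read off directly from $\partial_1^i\partial_2\, u_\ve(p,t)=\frac{\partial^i}{\partial p^i}F_\ve\big(t,u_\ve(p,t),p\big)$, a polynomial in already-estimated quantities, so a fresh Gronwall step is only required for pure $p$-derivatives; your phrase ``each step giving one further linear equation'' should be read with that caveat.
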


\begin{proof}
{\it Existence}: Let $(\toe{t})\ee$ be a representative of $\ti t_0$. 
Proceeding as in the proof of Theorem \ref{gen ode}, we set $\delta\ee := \sup 
\{ |\ti t_{0 \ve'} - t_0| \,|\, 0 < \ve' \le \ve \}$ and 
$J\ee:=[t_0-h+\delta\ee,t_0+h-\delta\ee]$. For every $p \in P$ there exists a 
net of (classical) solutions $u\ee(p,\,.\,): J\ee \to L_\beta$ of the initial 
value problem
\begin{equation}
\label{ivp eps 2}
\dot u\ee (t)=F\ee(t,u\ee(t),p), \quad u\ee(\toe{t})=\toe{x} \qquad (\ve \le \ve_0),
\end{equation}
satisfying $u\ee(p,J\ee^\circ) \subseteq W$. By the classical Existence and 
Uniqueness Theorem for ODEs with parameter, the mappings $(p,t) \mapsto 
u\ee(p,t)$ are $\Con^\infty$. Lemma \ref{ausdehn} provides $\ti u_\eps \in 
\Con^\infty (P \times [t_0-h,t_0+h],W)$ being equal to $u_\eps$ on $\ti 
J_\eps:=[t_0-h+2\de_\eps, t_0+h-2\de_\eps]$.

In order to show that $(\ti u\ee)\ee$ is moderate on $J$ it again suffices to 
establish the corresponding estimates for $(u\ee)\ee$. C-boundedness of $(\ti 
u\ee)\ee$ is shown as in the proof of Theorem \ref{gen ode}.

The moderateness of $(u\ee)\ee$ will be shown in three steps: First we consider 
derivatives with respect to $t$, then only derivatives with respect to $p$ and, 
finally, mixed derivatives.

The $\calE_M$-estimates for $u\ee (p,t)$, $\partial_2 u\ee (p,t)$ and all its 
derivatives with respect to $t$ are obtained in the same way as in the proof of 
Theorem \ref{gen ode}.

Next, we consider the derivatives with respect to $p$. Differentiating the 
integral equation corresponding to the initial value problem (on the level of 
representatives) with respect to $p$ yields
\begin{equation}
\label{ivp int abl}
\partial_1 u\ee (p,t) = \int_{\toe{t}}^t \Big( \partial_2 F\ee \big( 
s,u\ee(p,s),p \big) \cdot \partial_1 u\ee(p,s) + \partial_3 F\ee \big( s, 
u\ee(p,s),p \big) \Big) \, \diff s.
\end{equation}
Let $K_1 \times K_2 \cptin P \times J$ and $(p,t) \in K_1 \times K_2$. By 
$u\ee(K_1 \times K_2) \subseteq L_\beta \cptin U$ and \eqref{condi 2 p}, we 
obtain
\[
| \partial_1 u\ee (p,t) |
\le h \, C_1 \ve^{-N_1} + \bigg| \int_{\toe{t}}^t C_2 | \log \ve | \cdot | 
\partial_1 u\ee(p,s) | \, \diff s \bigg|
\]
for constants $C_1,C_2>0$ and some fixed $N \in \setN$. By Gronwall's Lemma, it 
follows that
\[
| \partial_1 u\ee(p,t) |
\le h \, C_1 \ve^{-N_1} \cdot e^{| \int_{\toe{t}}^t C_2 | \log \ve | \, \diff s |}
\le (h \, C_1) \, \ve^{-(N_1+ h \, C_2)}.
\]
Differentiating \eqref{ivp int abl} $i-1$ times with respect to $p$ ($i \in 
\setN$) gives an integral formula for $\partial_1^i u\ee(p,t)$. Observe that in 
this formula $\partial_1^i u\ee(p,t)$ itself appears on the right-hand side 
only once, namely with $\partial_2 F\ee(s,u\ee(p,s),p)$ as coefficient, and that 
the remaining terms contain only $\partial_1$-derivatives of $u\ee$ of order 
less than $i$. Thus, we may estimate the higher-order derivatives with respect 
to $p$ inductively by differentiating equation \eqref{ivp int abl} and applying 
Gronwall's Lemma.

Finally, it remains to handle the case of mixed derivatives. For arbitrary $i 
\in \setN$ we have
\begin{equation}
\label{fast schluss}
\partial^i_1 \partial_2 \, u\ee(p,t)
= \frac{\partial^i}{\partial p^i} \frac{\partial}{\partial t} \left( \toe{x} + 
\int_{\toe{t}}^t F\ee \big( s,u\ee(p,s),p \big) \, \diff s \right)
= \frac{\partial^i}{\partial p^i} F\ee \big( t,u\ee(p,t),p \big).
\end{equation}
By carrying out the $i$-fold differentiation on the right-hand side of 
equa-\linebreak tion \eqref{fast schluss}, we obtain a polynomial expression in 
$\partial_2^k F\ee \big( t,u\ee(p,t),p \big)$, \linebreak $\partial_3^k F\ee 
\big( t,u\ee(p,t),p \big)$ and $\partial_1^k u\ee (p,t)$ for $1 \le k \le i$ all 
of which satisfy the $\calE_M$-estimates. The estimates for $\partial^i_1 
\partial^j_2 \, u\ee(p,t)$ with $j \ge 2$ are now obtained inductively by 
differentiating equation \eqref{fast schluss} with respect to $t$.

{\it Uniqueness}: By Proposition \ref{punktweise}, it suffices to show that for 
every near-standard point $\ti p \in \ti P_c$ the solution $u(\ti p,\, . \,)$ is 
unique in $\calG [J,U]$. For a fixed near-standard point $\ti p=[(\ti p\ee)\ee] 
\in \ti P_c$, condition \eqref{condi 2 p} implies the condition for uniqueness 
\eqref{condi 2} in Theorem \ref{gen ode} with respect to $G\ee(t,x):=(F\ee(\, . 
\, , \, . \, ,\ti p\ee))\ee$, yielding uniqueness of $u(\ti p,\, . \,)$ in 
$\calG [J,U]$.
\end{proof}

\begin{remark}
\label{rem gen ode p}
Similarly to Remark \ref{rem gen ode}\,\eqref{rem gen ode 1}, a corresponding 
statement on the level of representatives can be extracted from the proof of the 
preceeding theorem. Also \eqref{rem gen ode 2} and \eqref{rem gen ode 3} of 
Remark \ref{rem gen ode} apply.
\end{remark}

Requiring also $\ti x_0$ in the initial condition in Theorem \ref{gen ode p} to 
be near-standard, we even can prove $\calG$-dependence of the solution on the 
initial values.

\begin{theorem}
\label{gen ode anfang}
Let $I$ be an open subinterval of $\setR$, $U$ an open subset of $\setR^n$, $P$ 
an open subset of $\setR^l$, $\ti t_0$ a near-standard point in $\ti I_c$ with 
$\ti t_0 \approx t_0 \in I$, $\ti x_0$ a near-standard point in $\ti U_c$ with 
$\ti x_0 \approx x_0 \in U$ and $F \in \calG (I \times U \times P)^n$.

With $\alpha>0$ and $\beta>0$ satisfying $[t_0-\al,t_0+\al] \cptin I$ and 
$\overline{B_\beta (x_0)} \cptin U$, respectively, set
\[
Q
:= [t_0-\al,t_0+\al] \times \overline{B_\beta (x_0)}
\qquad
( \cptin I \times U ).
\]
Assume that $F$ has a representative $(F\ee)\ee$ satisfying
\begin{equation}
\label{condi 1 a}
\sup_{(t,x,p) \in Q \times P} |F\ee(t,x,p)| \le a
\qquad (\ve  \le \ve_0)
\end{equation}
for some constant $a>0$ and $\ve_0 \in (0,1]$ and that for all compact subsets 
$K$ of $P$
\begin{equation}
\label{condi 2 a}
\sup_{(t,x,p) \in Q \times K} | \partial_2 F\ee (t,x,p) | = O (| \log \ve |) .
\end{equation}
 
Then the following holds: For fixed $h \in \left( 0 , \min \big( \alpha, 
\frac{\beta}{a} \big) \right)$ there exist open neighbourhoods $J_1$ of $t_0$ in 
$J:= ( t_0-h,t_0+h )$ and $U_1$ of $x_0$ in $U$ and a generalized function $u 
\in \calG [J_1 \times U_1 \times P \times J,B_\gamma (x_0)]$ with $\gamma \in 
(0,\beta)$ and $\beta - \gamma>0$ sufficiently small, such that for all $(\ti 
t_1,\ti x_1,\ti p) \in \ti J_{1c} \times \ti U_{1c} \times \ti P_c$ the map 
$u(\ti t_1,\ti x_1,\ti p,\, . \,) \in \calG [J,B_\gamma (x_0)]$ is a solution of 
the initial value problem
\begin{equation}
\label{ivp anfangswert}
\dot u (t)=F(t,u(t),\ti p), \quad u(\ti t_1)=\ti x_1.
\end{equation}
The solution $u$ is unique in $\calG [ J_1 \times U_1 \times P \times J , 
B_\gamma (x_0) ]$.
\end{theorem}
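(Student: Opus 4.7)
The plan is to reduce everything to the already-established parameterized Theorem \ref{gen ode p} by the standard translation trick: setting $w(s) := u(t_1 + s) - x_1$ turns the initial condition $u(\tilde t_1) = \tilde x_1$ into $w(0) = 0$, while $(t_1, x_1)$ are absorbed into the right-hand side as additional parameters alongside $p$. Concretely, with $G_\varepsilon(s, w, t_1, x_1, p) := F_\varepsilon(t_1 + s, w + x_1, p)$, the IVP (\ref{ivp anfangswert}) is transformed into
\begin{equation*}
\dot w(s) = G(s, w(s), t_1, x_1, p), \quad w(0) = 0,
\end{equation*}
with ``parameters'' now in the enlarged set $J_1 \times U_1 \times P$.

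First I would fix the geometric constants. Since $h < \min(\alpha, \beta/a)$ is strict, we can pick positive $\eta_1, \eta_2$ and a $\gamma \in (0,\beta)$ satisfying $h + \eta_1 < \alpha - \eta_1$, $a(h + \eta_1) < \beta - \eta_2$, and $a(h+\eta_1) + \eta_2 < \gamma$. Set $J_1 := B_{\eta_1}(t_0)$, $U_1 := B_{\eta_2}(x_0)$, $\alpha' := \alpha - \eta_1$, $\beta' := \beta - \eta_2$. For $(s, w, t_1, x_1) \in [-\alpha', \alpha'] \times \overline{B_{\beta'}(0)} \times \overline{J_1} \times \overline{U_1}$ the argument $(t_1 + s, w + x_1)$ stays inside $Q$, so hypothesis (\ref{condi 1 a}) yields the boundedness condition (\ref{condi p}) for $G$ on $Q' \times (J_1 \times U_1 \times P)$ with the same constant $a$, and (\ref{condi 2 a}) transfers similarly to (\ref{condi 2 p}) because differentiation of $G_\varepsilon$ in $w$ equals $\partial_2 F_\varepsilon$.

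Applying Theorem \ref{gen ode p} with the (standard) initial data $s_0 = 0$, $w_0 = 0$ and parameter space $J_1 \times U_1 \times P$, we obtain $w \in \calG[(J_1 \times U_1 \times P) \times J^*, B_{\beta'}(0)]$ on $J^* = (-h^*, h^*)$ with $h^* = \min(\alpha', \beta'/a) > h + \eta_1$, unique in $\calG[(J_1 \times U_1 \times P) \times J^*, B_{\beta'}(0)]$ (for each near-standard parameter). Then define $u$ through the representative
\begin{equation*}
u_\varepsilon(t_1, x_1, p, t) := w_\varepsilon(t_1, x_1, p, t - t_1) + x_1.
\end{equation*}
Since $(t_1, x_1, p, t) \mapsto (t_1, x_1, p, t - t_1)$ is a smooth diffeomorphism mapping $J_1 \times U_1 \times P \times J$ into $J_1 \times U_1 \times P \times J^*$ with derivatives of all orders uniformly bounded, moderateness, negligibility and c-boundedness are preserved; c-boundedness into $B_\gamma(x_0)$ follows from the representative estimate $|w_\varepsilon(\ldots)| \le a\,|t - t_1| \le a(h + \eta_1)$ together with $|x_1 - x_0| \le \eta_2$ on compacta, using our choice $a(h + \eta_1) + \eta_2 < \gamma$. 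Evaluating at a near-standard $(\tilde t_1, \tilde x_1, \tilde p)$, the fact that $w(\tilde t_1, \tilde x_1, \tilde p, \cdot)$ solves the reduced IVP on $J^*$ yields, after the change of variables $t = s + \tilde t_1$, that $u(\tilde t_1, \tilde x_1, \tilde p, \cdot)$ solves (\ref{ivp anfangswert}) in $\calG[J, B_\gamma(x_0)]$.

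For uniqueness, any competing $v \in \calG[J_1 \times U_1 \times P \times J, B_\gamma(x_0)]$ yields $\tilde v_\varepsilon(t_1, x_1, p, s) := v_\varepsilon(t_1, x_1, p, s + t_1) - x_1$, a generalized function on $(J_1 \times U_1 \times P) \times J^{**}$ for a slightly smaller $J^{**}$; it satisfies the reduced IVP at every near-standard parameter, so the uniqueness clause of Theorem \ref{gen ode p} forces $\tilde v = w$ in $\calG$, and transforming back gives $v = u$ by Proposition \ref{punktweise}. The main technical hurdle is the bookkeeping of the shrinking radii $\eta_1, \eta_2, \beta - \gamma$: they must be chosen small enough simultaneously to keep the translated arguments inside the region $Q$ on which the hypotheses (\ref{condi 1 a})--(\ref{condi 2 a}) live, while still providing room for $(t - t_1)$ to range over an interval strictly larger than $(-h, h)$ so that the solution $w$ of the reduced problem is well-defined everywhere needed.
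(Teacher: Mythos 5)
Your proposal is correct and follows essentially the same route as the paper: both reduce to Theorem~\ref{gen ode p} by absorbing $(t_1,x_1)$ as extra parameters via the substitution $w(s)=u(t_1+s)-x_1$, i.e.\ $G_\eps(s,w,t_1,x_1,p)=F_\eps(t_1+s,w+x_1,p)$, then translate the resulting $w$ back by $u_\eps(t_1,x_1,p,t)=w_\eps(t_1,x_1,p,t-t_1)+x_1$ and verify the c-boundedness estimates and, for uniqueness, translate a competing solution into the reduced frame and invoke Proposition~\ref{punktweise}. The only minor difference is notational bookkeeping of the shrinkage constants; the paper uses $\lambda,\mu,\delta,\eta,\sigma$ where you use $\eta_1,\eta_2,\gamma$, and in the uniqueness step the paper is a bit more explicit about restricting to subintervals $(t_0-r,t_0+r)$ and appealing to the sheaf property of $\calG$, whereas you leave the restriction to the shrunken interval $J^{**}$ implicit.
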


\begin{proof}
{\it Existence}: The basic strategy of the proof is to consider $(\ti t_0,\ti 
x_0)$ as part of the parameter and apply Theorem \ref{gen ode p}. However, we 
will have to cope with some technicalities.

Let $(\toe{t})\ee$ and $(\toe{x})\ee$ be representatives of $\ti t_0$ and $\ti 
x_0$, respectively. From now on, we always let $\ve \le \ve_0$. Let $\lambda \in 
(0,1)$ and set
\[
\hat I:= (-\lambda \alpha,\lambda \alpha), \quad I_1:= (t_0-(1-\lambda) 
\alpha,t_0+(1-\lambda) \alpha).
\]
Choose $\mu \in \big( 0,\frac{\beta}{3} \big)$, set $\gamma := \beta - 2 \mu$ 
and define
\[
\hat U :=B_{\gamma+\mu}(0), \quad U_1:=B_{\mu}(x_0).
\]
Then
$
\hat I + I_1 = (t_0-\al,t_0+\al) \subseteq I
$
and
$
\hat U + U_1 = B_\beta (x_0) \subseteq U
$.
Hence, we may define $G\ee : \hat I \times \hat U \times (I_1 \times U_1 \times 
P) \to \setR^n$ by
\[
G\ee(t,x,(t_1,x_1,p)):=F\ee(t+t_1,x+x_1,p).
\]
Obviously, $(G\ee)\ee$ is moderate and, therefore, $G:=[(G\ee)\ee]$ is in 
$\calG (\hat I \times \hat U \times \linebreak (I_1 \times U_1 \times P))^n$. 
Now let $\delta \in (0, \lambda \alpha)$ and $\eta \in (0, \gamma - \mu)$. By 
assumptions \eqref{condi 1 a} and \eqref{condi 2 a}, we obtain $| G\ee 
(t,x,(t_1,x_1,p)) | \le a$ for all $(t,x,(t_1,x_1,p)) \in \overline{B_\delta 
(0)} \times \overline{B_\eta (0)} \times (I_1 \times U_1 \times P)$ and $| 
\partial_2 G\ee (t,x,(t_1,x_1,p)) | = O (| \log \ve |)$ for all $K \cptin I_1 
\times U_1 \times P$ and $(t,x,(t_1,x_1,p)) \in \overline{B_\delta (0)} \times 
\overline{B_\eta (0)} \times K$. By Theorem \ref{gen ode p}, there exists $v \in 
\calG [(I_1 \times U_1 \times P) \times \hat J, B_\eta (0)]$ with $\hat J:=(- 
\hat h, \hat h)$ and $\hat h = \min \left( \delta, \frac{\eta}{a} \right)$ such 
that for all $(\ti t_1,\ti x_1,\ti p) \in \ti I_{1c} \times \ti U_{1c} \times 
\ti P_c$ the map $v(\ti t_1,\ti x_1,\ti p,\, . \, ) \in \calG [\hat J, B_
 \eta (0)]$ is a solution of the initial value problem
\begin{equation}
\label{ivp anfangswert red}
\dot v (t) = G(t, v (t),(\ti t_1,\ti x_1,\ti p)), \quad v(0)=0 .
\end{equation}
The solution $v$ is unique in $\calG [(I_1 \times U_1 \times P) \times \hat 
J,\hat U]$.

By Remark \ref{rem gen ode p}, there exists a representative $(v\ee)\ee$ of $v$ 
that satisfies the classical initial value problem for all $(t_1,x_1,p) \in I_1 
\times U_1 \times P$ and $\ve$ sufficiently small. Let $\sigma \in \big[ \frac 1 
2,1 \big)$, $h := \sigma \hat h$ and $h_1:= \min ((1- \sigma ) \hat h, 
(1-\lambda ) \alpha )$. Set $J:= (t_0-h,t_0+h)$ and $J_1:= (t_0-h_1,t_0+h_1)$. 
Then $J_1 \subseteq J \subseteq \hat J$. We now define $u\ee : J_1 \times U_1 
\times P \times J \to \setR^n$ by
\[
u\ee(t_1,x_1,p,t):= v\ee(t_1,x_1,p,t-t_1) + x_1.
\]
The map $u\ee$ is well-defined since $J_1 \subseteq I_1$ and
\begin{equation}
\label{alles ok}
|t-t_1| \le |t-t_0|+|t_0-t_1|
\le h + h_1
\le \sigma \hat h + (1-\sigma) \hat h
= \hat h.
\end{equation}
The moderateness of $(u\ee)\ee$ is an immediate consequence of the moderateness 
of $(v\ee)\ee$. By \eqref{alles ok} and since $x_1-x_0 \in B_{\mu}(0)$ for all 
$x_1 \in U_1$, it follows that
\begin{align*}
u\ee(J_1 \times U_1 \times P \times J)
& \subseteq v\ee(I_1 \times U_1 \times P \times \hat J) +x_1
\subseteq B_\eta (0) +x_1 \\
& \subseteq B_{\eta}(x_0) -x_0 +x_1
\subseteq \overline{B_{\eta}(x_0) + B_{\mu}(0)}
\subseteq B_\gamma (x_0),
\end{align*}
i.e., $u:=[(u\ee)\ee]$ is an element of $\calG [J_1 \times U_1 \times P \times 
J,B_\gamma (x_0)]$. Furthermore, the function $u\ee(\tee t,\tee x,\ti p\ee,\, . 
\, )$ satisfies
\begin{align*}
\frac{\partial}{\partial t} u\ee & (\tee t,\tee x,\ti p\ee,t)
= \frac{\partial}{\partial t} \Big( v\ee(\tee t,\tee x,\ti p\ee,t-\tee t) + 
\tee x \Big) \\
& = G\ee (t-\tee t, v\ee(\tee t,\tee x,\ti p\ee,t-\tee t), (\tee t,\tee x,\ti 
p\ee)) \\
& = F\ee (t, v\ee(\tee t,\tee x,\ti p\ee,t-\tee t) + \tee x,\ti p\ee)
= F\ee (t,u\ee(\tee t,\tee x,\ti p\ee,t),\ti p\ee)
\end{align*}
and
\[
u\ee(\tee t,\tee x,\ti p\ee,\tee t)= v\ee(\tee t,\tee x,\ti p\ee,0)+\tee x=\tee x
\]
for all $(\ti t_1,\ti x_1,\ti p)=([(\tee t)\ee],[(\tee x)\ee],[(\ti p\ee)\ee]) 
\in \ti J_{1c} \times \ti U_{1c} \times \ti P_c$ and $t \in J$. Thus, $u(\ti 
t_1,\ti x_1,\ti p,\, . \,)$ is indeed a solution of the initial value problem 
\eqref{ivp anfangswert}.

Note that for any $h \in \left( 0 , \min \big( \alpha, \frac{\beta}{a} \big) 
\right)$ the constants $\lambda$, $\mu$, $\delta$, $\eta$, $\hat h$ and $\sigma$ 
can be chosen within their required bounds such that all the necessary 
inequalities in the construction of $(u\ee)\ee$ are satisfied.

{\it Uniqueness}:  By Proposition \ref{punktweise}, it suffices to show that for 
every near-standard point $(\ti t_1,\ti x_1,\ti p) = ([(\tee t)\ee],[(\tee 
x)\ee],\ti p) \in \ti J_{1c} \times \ti U_{1c} \times \ti P_c$ the solution 
$u(\ti t_1,\ti x_1,\ti p,\, . \,)$ is unique in $\calG [J,B_\gamma (x_0)]$: Let 
$(\tee t,\tee x) \to (t_1,x_1) \in J_1 \times U_1$ for $\ve \to 0$. Assume that 
$w(\ti t_1,\ti x_1,\ti p) \in \calG [J,B_\gamma (x_0)]$ is another solution of 
(\ref{ivp anfangswert}). For brevity's sake we simply write $u$ and $w$ in place 
of $u(\ti t_1,\ti x_1,\ti p)$ and $w(\ti t_1,\ti x_1,\ti p)$, respectively.

We will show that $w|_{(t_0-r,t_0+r)}=u|_{(t_0-r,t_0+r)}$ holds for any $r \in 
(0,h)$. Since $\calG$ is a sheaf, the equality of $w$ and $u$ then also holds on 
$J$.

Now, let $r \in (0,h)$ and set $\rho := \frac 1 2 (h-r)$. Define $\bar w: 
B_{r+\rho}(t_0-t_1) \to B_{\gamma+\mu}(0)$ by $\bar w(t):=w(t+\ti t_1)-\ti x_1$. 
From $\ti t_{1\ve} \to t_1$ as $\ve \to 0$ it follows that $\bar w$ is 
well-defined. Then, by the choice of $\rho$ and Proposition \ref{composition}, 
$\bar w \in \calG[B_{r+\rho}(t_0-t_1), B_{\gamma+\mu}(0)]$. Moreover, $\bar w$ 
is a solution of the initial value problem \eqref{ivp anfangswert red}. Since 
$B_{r+\rho} (t_0-t_1) \subseteq \hat J$ and solutions of \eqref{ivp anfangswert 
red} are unique in $\calG [\hat J,B_{\gamma+\mu} (0)]$, it follows that $\bar 
w=v(\ti t_1,\ti x_1,\ti p,\,.\,)|_{B_{r+\rho} (t_0-t_1)}$. Noting that
\[
w(t)
= \bar w(t- \ti t_1)+ \ti x_1
= v(\ti t_1,\ti x_1,\ti p,t- \ti t_1)+ \ti x_1
= u(t),
\]
we finally arrive at $w|_{(t_0-r,t_0+r)}=u|_{(t_0-r,t_0+r)}$.
\end{proof} 

\begin{remark}
Concerning representatives, a remark analogous to \ref{rem gen ode p} also 
applies to Theorem \ref{gen ode anfang}.
\end{remark}


\section{A Frobenius theorem in generalized functions}

\label{frob}

In this section, we will use the following notation: By $\setR^{m \times n}$ we 
denote the space $\setR^{mn}$, viewed as the space of $(m \times n)$-matrices 
over $\setR$. A similar convention applies to $\calR^{m \times n}$ and $\calG 
(U)^{m \times n}$. For any $u \in \calG (U)^m$ the derivative $\Diff u$ can be 
regarded as an element of $\calG (U)^{m \times n}$.

Now we are ready to prove a generalized version of the Frobenius Theorem.

\begin{theorem}
\label{gen frob}
Let $U$ be an open subset of $\setR^n$, $V$ an open subset of $\setR^m$ and $F 
\in \calG (U \times V)^{m \times n}$. Let $\alpha >0$ be chosen such that 
$\overline{B_\al(x_0)} \cptin U$. Let $(\toe{y})\ee$ be a representative of $\ti 
y_0$ and $L \cptin V$, $\ve_0 \in (0,1]$ such that $\toe{y} \in L$ for all $\ve 
\le \ve_0$. With $\beta>0$ satisfying $L_\beta:=L+\overline{B_\beta (0)} \cptin 
V$ set
\[
Q:=\overline{B_\alpha (x_0)} \times L_\beta
\qquad (\cptin U \times V).
\]
Assume that $F$ has a representative $(F\ee)\ee$ satisfying
\begin{equation}
\label{unif bded}
\sup_{(x,y) \in Q} |F\ee (x,y)| \le a
\qquad (\ve \le \ve_0)
\end{equation}
for some constant $a>0$ and
\begin{equation}
\label{log bed}
\sup_{(x,y) \in Q} |\partial_2 F\ee(x,y)| = O(| \log \ve |).
\end{equation}

Then the following are equivalent:
\begin{enumerate}[(A)]
\item
\label{frob eins}
For all $(\ti x_0, \ti y_0) \in \ti U_c \times \ti V_c$ with $\ti x_0 \approx 
x_0 \in U$ the initial value problem
\begin{equation}
\label{ivp total}
\Diff u(x)=F(x,u(x)), \quad u( \ti x_0)= \ti y_0
\end{equation}
has a unique solution $u(\ti x_0,\ti y_0)$ in $\calG [U(\ti x_0,\ti y_0),W]$, 
where $U(\ti x_0,\ti y_0)$ is an open neighbourhood of $x_0$ in $U$ and 
$W=L+B_\beta (0)$.
\item
\label{frob zwei}
The integrability condition is satisfied, i.e., the mapping
\begin{align}
\label{int cond}
(x,y,v_1,v_2) & \mapsto \Diff F(x,y) (v_1, F(x,y) (v_1)) (v_2)
\end{align}
is symmetric in $v_1, v_2 \in \setR^n$ as a generalized function in $\calG (U 
\times V \times \setR^n \times \setR^n)^m$.
\end{enumerate}
\end{theorem}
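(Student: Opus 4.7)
The plan is to follow the classical Frobenius strategy, reducing the total system to a parametrized ODE along rays emanating from $\ti x_0$, invoking Theorem~\ref{gen ode anfang} as the ODE engine, and using Proposition~\ref{punktweise} to promote pointwise identities on representatives to identities in $\calG$.

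For \eqref{frob eins}~$\Rightarrow$~\eqref{frob zwei}, I would fix an arbitrary classical point $(x_0', y_0') \in U \times V$ (near which the bounds of the theorem hold) and apply \eqref{frob eins} with the classical initial condition $u(x_0') = y_0'$, obtaining representatives satisfying $\Diff u\ee(x) = F\ee(x, u\ee(x)) + n\ee(x)$ with $(n\ee)\ee \in \calN$. Differentiating once more and using the symmetry of $\Diff^2 u\ee$ would show that the integrability defect of $F\ee$ at $(x_0', u\ee(x_0'), v_1, v_2)$ is negligible for all $v_1,v_2 \in \R^n$; since $u\ee(x_0') = y_0'$ is classical, this gives the defect at $(x_0', y_0', v_1, v_2)$. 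Letting $(x_0', y_0')$ range over all classical points and applying Proposition~\ref{punktweise} (using that the defect depends polynomially on $(v_1,v_2)$) would then yield \eqref{frob zwei} as an equality in $\calG(U \times V \times \R^n \times \R^n)^m$.

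For the converse, I would fix $(\ti x_0, \ti y_0)$ with $\ti x_0 \approx x_0$ and, for $x$ in a small ball around $x_0$ and $s \in (-\theta, 1+\theta)$, consider the parametrized ODE
\[
\dot\phi(s) = F\bigl(\ti x_0 + s(x-\ti x_0), \phi(s)\bigr)(x - \ti x_0), \qquad \phi(0) = \ti y_0 ,
\]
with $x$ playing the role of the parameter. Hypotheses \eqref{unif bded} and \eqref{log bed} supply precisely the bounds required by Theorem~\ref{gen ode anfang} (after the standard translation that shifts $\ti y_0$ to the origin), yielding $\phi \in \calG[U(\ti x_0, \ti y_0) \times J, W]$ with $[0,1] \subset J$. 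I would then set $u(x) := \phi(x, 1)$, so that automatically $u \in \calG[U(\ti x_0, \ti y_0), W]$ and $u(\ti x_0) = \ti y_0$. Uniqueness will follow by restricting any competing solution to line segments through $\ti x_0$ and invoking the uniqueness part of Theorem~\ref{gen ode anfang}.

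The key remaining step, and the main obstacle, is to verify $\Diff u = F(\cdot, u)$. I plan to introduce
\[
H_i(x, s) := \partial_{x_i}\phi(x, s) - s\, F\bigl(\ti x_0 + s(x-\ti x_0), \phi(x, s)\bigr)(e_i),
\]
which vanishes at $s = 0$. A direct calculation---differentiating the defining ODE with respect to $x_i$ and cancelling the two $\partial_1 F$-terms by means of the integrability identity \eqref{int cond} applied with $v_1 = e_i$ and $v_2 = x - \ti x_0$---reduces $\partial_s H_i$ to $\partial_2 F\bigl(\ti x_0 + s(x-\ti x_0), \phi\bigr)(H_i)(x-\ti x_0)$, up to a remainder which is precisely the integrability defect of $F\ee$ and hence is negligible by \eqref{frob zwei}. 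Gronwall's lemma on representatives then forces $H_i \in \calN$; evaluating at $s = 1$ gives $\partial_{x_i} u(x) = F(x, u(x))(e_i)$, as required. The delicate point is the Gronwall step: the linear coefficient $\partial_2 F\ee$ grows at most like $|\log\ve|$ by \eqref{log bed}, and this is exactly what is needed to absorb the $\calN$-error through the $s$-integration on $[0,1]$, in complete analogy with the uniqueness argument in Theorem~\ref{gen ode}.
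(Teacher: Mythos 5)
Your strategy mirrors the paper's almost exactly: reduce to a parametrized ODE along rays from $\ti x_0$, obtain a $\calG$-dependent family of solutions, introduce an auxiliary net measuring the defect $\Diff u - F(\cdot,u)$, reduce its $s$-derivative via the integrability condition to a homogeneous linear term with $\partial_2 F$ as coefficient, and close by a logarithmic Gronwall estimate; uniqueness again via restriction to line segments. Your $H_i$ is the paper's $k\ee$ in disguise, and your $\phi(x,s)$ is the paper's $f(v,t)$ after the reparametrization $v=(x-\ti x_0)/r$, $t=rs$.

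Two points need fixing. First, in \eqref{frob eins}~$\Rightarrow$~\eqref{frob zwei} you propose to verify the vanishing of the integrability defect ``at all classical points $(x_0',y_0')$'' and then invoke Proposition~\ref{punktweise}. That proposition, and the result of \cite{sanja} behind it, require evaluation at \emph{near-standard generalized} points, not merely classical ones: a nonzero generalized function can vanish at every classical point (e.g.\ a bump travelling along an $\ve$-dependent path). You must apply \eqref{frob eins} to near-standard $(\ti x,\ti y)\in\ti U_c\times\ti V_c$, exactly as the paper does, and then differentiate the resulting solution twice and use Schwarz. (Restricting $(v_1,v_2)$ to a basis via polynomiality is fine.) Second, the ODE engine you cite, Theorem~\ref{gen ode anfang}, is the wrong one and in fact does not apply as stated: it requires the initial value to be a \emph{near-standard} point, whereas here $\ti y_0\in\ti V_c$ is only assumed compactly supported. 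The correct tool is Theorem~\ref{gen ode p}, which handles an arbitrary $\ti x_0\in\ti U_c$ as initial value and a $\calG$-parameter at the same time; this is what the paper uses. Your ``translation that shifts $\ti y_0$ to the origin'' can be made to rescue the reference to Theorem~\ref{gen ode anfang}, since the translated initial value $0$ is trivially near-standard, but this detour is unnecessary and obscures the fact that the extra $\calG$-dependence on initial data which Theorem~\ref{gen ode anfang} provides is never used. Beyond these two points the proposal is sound and coincides with the paper's argument.
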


\begin{proof} We follow the line of argument of the classical proof based on 
the ODE theorem with parameters.

\eqref{frob eins} $\Rightarrow$ \eqref{frob zwei}:
By Proposition \ref{punktweise}, we only have to check the integrability 
condition \eqref{int cond} for all near-standard points $\ti v_1,\ti v_2 \in \ti 
\setR_c^n$ and $(\ti x, \ti y) \in \ti U_c \times \ti V_c$: By \eqref{frob 
eins}, there exists a solution $u$ of the initial value problem
$
\Diff u(x)=F(x,u(x)), 
u( \ti x)= \ti y.
$
Writing $\Diff u$ as $\Diff u =F \circ (\id,u)$, we obtain
\begin{align*}
\Diff^2 & u(\ti x)(\ti v_1,\ti v_2)
= \big(\Diff^2 u(\ti x) (\ti v_1)\big) (\ti v_2)
= \big( \Diff (F \circ (\id,u))(\ti x) (\ti v_1) \big) (\ti v_2) \\
& = \Big( \big( \Diff F(\ti x,u(\ti x)) \circ (\id, \Diff u(\ti x)) \big) (\ti 
v_1) \Big) (\ti v_2) \\
& = \Big( \Diff F \big( \ti x,u(\ti x) \big) \big( \ti v_1,F(\ti x,u(\ti x)) 
(\ti v_1) \big) \Big) (\ti v_2)
= \Diff F(\ti x, \ti y) \big( \ti v_1,F(\ti x, \ti y) (\ti v_1) \big) (\ti v_2)
\end{align*}
for all near-standard points $\ti v_1, \ti v_2 \in \ti \setR_c^n$. The last 
expression is symmetric in $\ti v_1$ and $\ti v_2$ since, by Schwarz's Theorem, 
$\Diff^2 u(\ti x)$ has this property.

\eqref{frob zwei} $\Rightarrow$ \eqref{frob eins}:
Let $\ti x_0=[(\toe{x})\ee]$ be a near-standard point in $\ti U_c$ with $\ti 
x_0 \approx x_0$ and let $\ti y_0 \in \ti V_c$.

{\it Existence}: Choose $\delta \in (0,\alpha)$ and set $\gamma:=\alpha 
-\delta$. We can assume without loss of generality that $\toe{x} \in B_\delta 
(x_0)$ for all $\ve \le \ve_0$. Then, for $t \in (-\gamma,\gamma)$ and $v \in 
B_1(0) \subseteq \setR^n$, we have  $\toe{x} + tv \in B_\alpha (x_0) \subseteq 
U$ and, thus, the function
\[
\begin{array}{cccc}
G\ee : & ( -\gamma,\gamma ) \times V \times B_1(0) & \to & \setR^m \\
& (t,y,v) & \mapsto & F\ee (\toe{x} +tv,y) (v)
\end{array}
\]
is well-defined. By Proposition \ref{composition}, $G:=[(G\ee)\ee]$ is a 
well-defined generalized function in $\calG\left( ( -\gamma,\gamma ) \times V 
\times B_1(0) \right)^m$.
Now consider the initial value problem
\begin{equation}
\label{ivp frob}
\dot f (t) = G(t,f(t),v), \quad f(0)= \ti y_0,
\end{equation}
with parameter $v \in B_1(0)$. Then the conditions of Theorem \ref{gen ode p} 
are satisfied, i.e., 
\[
| G\ee (t,y,v) | \le a
\qquad \mbox{and} \qquad
\partial_2 G\ee (t,y,v) = O( | \log \ve |)
\]
for all $(t,y,v) \in \overline{B_\eta (0)} \times L_\beta \times B_1(0)$ with 
$\eta \in (0,\gamma)$ fixed. From Theorem \ref{gen ode p}, it follows that there 
exists a generalized function $f \in \calG [B_1(0) \times J,W]$ with 
$J:=[-h,h]$, $h:=\min \big( \eta,\frac{\beta}{a} \big)$ and $W:=L+B_\beta(0)$ 
such that $f(v,\, . \, )$ is a solution of \eqref{ivp frob} for all $v \in 
B_1(0)$. Fix some $r \in (0,h)$ and $\lambda \in (0,1)$ and set
\[
U(\ti x_0,\ti y_0)
:= B_{\lambda r} (x_0).
\]
Assuming without loss of generality that $|x_0 - \toe{x}|< (1- \lambda )r$ for 
all $\ve \le \ve_0$, the function $u\ee (\ti x_0,\ti y_0) : U(\ti x_0,\ti y_0) 
\to W$ given by
\[
u\ee (\ti x_0,\ti y_0)(x):= f\ee \left( \frac{1}{r} (x- \toe{x}),r \right)
\]
is well-defined. \kern-.9pt By Proposition \ref{composition}, $u(\ti x_0,\ti 
y_0):=[(u\ee(\ti x_0,\ti y_0))\ee]\kern-.9pt \in\kern-.9pt \calG [U(\ti x_0, \ti 
y_0),W]$. From now on, we will denote $u(\ti x_0,\ti y_0)$ simply by $u$.

The fact that $u$ is indeed a solution of \eqref{ivp total} follows from 
\begin{equation}
\label{gleich!}
\partial_1 f(v,t) (w)
= F(x_0+tv,f(v,t)) (t w)
\quad\mbox{in $\calG ((-h,h) \times B_1(0) \times \setR^n)^m$.}
\end{equation}
Assuming this to be true for the moment, we have
\begin{align*}
\Diff u(x) (\ti w)
&= \Big( \frac{\partial}{\partial x} f \Big( \frac{x-\ti x_0}{r},r \Big) \Big) 
(\ti w)
= \partial_1 f \Big( \frac{x-\ti x_0}{r},r \Big) \Big(\frac{1}{r} \ti w \Big) \\
& = F(x,u(x)) (\ti w)
\end{align*}
for all $\ti w \in \ti \setR_c^n$. Applying Proposition \ref{punktweise} to the 
above equation, we obtain $\Diff u(x)=F(x,u(x))$ in $\calG [U(\ti x_0,\ti 
y_0),W]$. Moreover, we observe that $f(0,\, . \, )$ is the (in $\calG 
[(-h,h),W]$) constant function $t \mapsto \ti y_0$, and hence we obtain
$
u(\ti x_0)
= f ( \frac 1 r (\ti x_0 - \ti x_0),r )
= \ti y_0.
$
Thus, $u$ is indeed a solution of the initial value problem \eqref{ivp total}.

To complete the proof of existence, it remains to show \eqref{gleich!}:
Consider the net $(k\ee)\ee$ given by $k\ee: (-h,h) \times B_1(0) \times 
\setR^n \rightarrow \setR^m$,
\[
k\ee(t,v,w):=\partial_1 f\ee(v,t) (w) - F\ee(\toe{x}+tv,f\ee(v,t)) (tw).
\]
Note that, by Proposition \ref{composition}, $k:=[(k\ee)\ee]$ is a well-defined 
generalized function in $\calG ((-h,h) \times B_1(0) \times \setR^n)^m$. Let 
$\ti v \in \widetilde{B_1(0)}_c$ and $\ti w \in \ti \setR^n_c$. Differentiating 
$k(t, \ti v, \ti w)$ with respect to $t$, using the fact that $f(\ti v,\, . \,)$ 
is a solution of \eqref{ivp frob} and setting $\ti z=(\ti x_0+t \ti v,f(\ti 
v,t))$, we obtain
\[
\dot k(t,\ti v,\ti w)
= \partial_1 F(\ti z) (t \ti w, \ti v) + \partial_2 F(\ti z) (\partial_1 f(\ti 
v,t)(\ti w), \ti v) - \Diff F(\ti z) (\ti v,F(\ti z)  (\ti v)) (t \ti w).
\]
Applying the integrability condition \eqref{frob zwei} to the last term on the 
right-hand side, we arrive at
\begin{equation}
\label{linear ivp}
\dot k(t,\ti v,\ti w)
= \Big( \partial_2 F(\ti x_0+t \ti v,f(\ti v,t))\flip (\ti v) \Big)  \cdot 
k(t,\ti v,\ti w).
\end{equation}
Moreover, observe that $k(0,\ti v,\ti w)=0$ in $\ti \setR^m$. Hence, $k(\, . \, 
,\ti v,\ti w)$ is a solution of a linear initial value problem. Setting $A_{\ti 
v}(t):= \partial_2 F(\ti x_0+t \ti v,f(\ti v,t))\flip (\ti v)$, it follows from 
\eqref{log bed} that 
\[
\sup_{t \in (-h,h)} | A_{\ti v}(t) | = O(| \log \ve |).
\]
By a Gronwall argument similar to the one in the uniqueness proof of Theorem 
\ref{gen ode} we infer that $k(\, . \, ,\ti v,\ti w)=0$ is the only solution of 
\eqref{linear ivp}. By Proposition \ref{punktweise}, we conclude that $k=0$ in 
$\calG ((-h,h) \times B_1(0) \times \setR^n)^m$, thereby establishing the claim.

{\it Uniqueness}: Let $\bar u \in \calG [B_{\lambda r}(x_0),W]$ be another 
solution of \eqref{ivp total}. We will show that $\bar 
u|_{B_s(x_0)}=u|_{B_s(x_0)}$ for all $s<\lambda r$. Since $\calG$ is a sheaf, 
the equality then also holds on $B_{\lambda r}(x_0)=U(\ti x_0,\ti y_0)$.

Let $s \in (0,\lambda r)$ and let $\ti v=[(\ti v\ee)\ee] \in 
\widetilde{B_1(0)}_c$. Setting $\sigma :=\frac 1 3 (\lambda r - s)$, we define 
$g(\ti v, \,.\,) : (-s-2\sigma,s+2\sigma) \to W$ by $g(\ti v ,t):=\bar u (\ti 
x_0 + t \ti v)$. From $\ti x_{0\ve} \to x_0$ as $\ve \to 0$ it follows that 
$g(\ti v,\,.\,)$ is well-defined. Then, by the choice of $\sigma$ and by 
Proposition \ref{composition}, $g(\ti v, \,.\,) \in 
\calG[(-s-2\sigma,s+2\sigma),W]$. Moreover, $g(\ti v,\,.\,)$ is a solution of 
\eqref{ivp frob} for $v=\ti v$. Since $(-s-2\sigma,s+2\sigma) \subseteq J$ and 
solutions of \eqref{ivp frob} are unique in $\calG [J,W]$, it follows that 
$g(\ti v,\,.\,)=f(\ti v,\, . \, )|_{(-s-2\sigma,s+2\sigma)}$ for all $\ti v \in 
\widetilde{B_1(0)}_c$. By Proposition \ref{punktweise}, $g: (v,t) \mapsto 
g(v,t)$ is equal to $f$ on $(-s-2\sigma,s+2\sigma)$. Observe that for 
$c_1,c_2>0$ the generalized functions
$
(v,t) \mapsto f\Big(\frac{1}{c_1}v,c_1 t\Big)
$
and
$
(v,t) \mapsto f\Big(\frac{1}{c_2}v,c_2 t\Big)
$
are equal on the intersection of their domains. Hence, we obtain
\begin{align*}
\bar u(x)
& = g \Big( \frac{1}{s+\sigma} (x-\ti x_0) \Big) (s+\sigma)
= f \Big( \frac{1}{s+\sigma} (x-\ti x_0) \Big) (s+\sigma) \\
& = f \Big( \frac{1}{r} (x-\ti x_0) \Big) (r)
= u(x),
\end{align*}
thereby establishing the claim.
\end{proof}

\section*{Acknowledgement}
The research was funded by the Austrian Science Fund (FWF): P23714-N13.


\begin{thebibliography}{10}

\bibitem{AB}
{Jorge Aragona, Hebe A. Biagioni:}
\newblock Intrinsic definition of the {C}olombeau algebra of generalized
  functions.
\newblock {\em Analysis Mathematica} {\bf 17} (1991), 75--132.

\bibitem{c1}
Jean-Fran\c{c}ois Colombeau:
\newblock {\em New Generalized Functions and Multiplication of Distributions}.
\newblock North Holland, Amsterdam, 1984.

\bibitem{c2}
Jean-Fran\c{c}ois Colombeau:
\newblock {\em Elementary Introduction to New Generalized Functions}.
\newblock North Holland, Amsterdam, 1985.

\bibitem{EEdiss}
Evelina Erlacher:
\newblock {\em Local existence results in algebras of generalised functions}.
\newblock PhD thesis, University of Vienna, 2007.\newline  
http://www.mat.univie.ac.at/\symbol{126}diana/uploads/publication47.pdf.

\bibitem{EEinvers}
Evelina Erlacher:
\newblock Inversion of {C}olombeau generalized functions.
\newblock {\em Proc.\ Edinburgh Math.\ Soc.} {\bf 55} (2012), 1--32.

\bibitem{EGpenrose}
Evelina Erlacher and Michael Grosser:
\newblock Inversion of a `discontinuous coordinate transformation' in general
  relativity.
\newblock {\em Appl.\ Anal.} {\bf 90} (2011), 1707--1728.

\bibitem{fer11}
Roseli Fernandez:
\newblock On the {H}amilton-{J}acobi equation in the framework of generalized
  functions.
\newblock {\em J. Math. Anal. Appl.} {\bf 382} (1) (2011), 487--502.

\bibitem{gradug}
Andrzej Granas and James Dugundji:
\newblock {\em Fixed point theory}.
\newblock Springer Monographs in Mathematics. Springer-Verlag, New York, 2003.

\bibitem{procESI}
Michael Grosser, G{\"u}nther H{\"o}rmann, Michael Kunzinger, and Michael
  Oberguggenberger, editors:
\newblock Proceedings of a Workshop held at ESI, Oct.--Dec.\ 1997, volume 401 
of {\em Chapman \& Hall/CRC Research Notes in
  Mathematics}.
\newblock CRC Press, Boca Raton, 1999.

\bibitem{GKOS01}
Michael Grosser, Michael Kunzinger, Michael Oberguggenberger, and Roland
  Steinbauer:
\newblock {\em Geometric theory of generalized functions with applications to
  general relativity}, volume 537 of {\em Mathematics and its Applications}.
\newblock Kluwer Academic Publishers, Dordrecht, 2001.

\bibitem{hahoe08}
Simon Haller and G{\"u}nther H{\"o}rmann:
\newblock Comparison of some solution concepts for linear first-order
  hyperbolic differential equations with non-smooth coefficients.
\newblock {\em Publ. Inst. Math. (Beograd) (Novi Sad)} {\bf 84(98)} 
(2008), 123--157.

\bibitem{her99}
Robert Hermann and Michael Oberguggenberger:
\newblock Ordinary differential equations and generalized functions.
\newblock In Michael Grosser, G{\"u}nther H{\"o}rmann, Michael Kunzinger, and
  Michael Oberguggenberger, editors, {\em Nonlinear theory of generalized
  functions (Vienna, 1997)}, volume 401 of {\em Chapman \& Hall/CRC Research
  Notes in Mathematics}, pages 85--98. Chapman \& Hall/CRC, Boca Raton, FL,
  1999.

\bibitem{sanja}
Sanja Konjik and Michael Kunzinger:
\newblock Generalized group actions in a global setting.
\newblock {\em J. Math. Anal. Appl.} {\bf 322} (1) (2006), 420--436.

\bibitem{flow}
Michael Kunzinger, Michael Oberguggenberger, Roland Steinbauer, and James~A.
  Vickers:
\newblock Generalized flows and singular {ODE}s on differentiable manifolds.
\newblock {\em Acta Appl. Math.} {\bf 80} (2) (2004), 221--241.

\bibitem{kun99b}
Michael Kunzinger and Roland Steinbauer:
\newblock A rigorous solution concept for geodesic and geodesic deviation
  equations in impulsive gravitational waves.
\newblock {\em J. Math. Phys.} {\bf 40} (3) (1999), 1479--1489.

\bibitem{MObook}
Michael Oberguggenberger:
\newblock {\em Multiplication of distributions and applications to partial
  differential equations}.
\newblock Longman Scientific \& Technical, Harlow, 1992.

\bibitem{weiss}
Johannes Weissinger:
\newblock Zur {T}heorie und {A}nwendung des {I}terationsverfahrens.
\newblock {\em Math. Nachr.} {\bf 8} (1952), 193--212.

\end{thebibliography}


\end{document}